\renewcommand{\leq}{\leqslant}
\renewcommand{\geq}{\geqslant}
\renewcommand{\phi}{\varphi}
\renewcommand{\epsilon}{\varepsilon}
\newcommand{\Vol}{\operatorname{vol}}
\newtheorem{thm}{Theorem}
\newtheorem{lem}[thm]{Lemma}
\newtheorem{prop}[thm]{Proposition}
\newtheorem{definition}[thm]{Definition}
\theoremstyle{remark}
\newtheorem{rem}[thm]{Remark}
\theoremstyle{conjecture}
\newtheorem{conj}[thm]{Conjecture}
\title[Congestion and fair-division problems]{Congestion in networks and manifolds,\\and fair-division problems}
\author{Dong Zhang}
\address{Department of Mathematics, University of Southern California, Los Angeles CA 90089-2532, U.S.A.}
\email{dongzusc@gmail.com}
\thanks{This work was partially supported by the grant DMS-1711297  from the US National Science Foundation.}
\begin{document}
\maketitle
\begin{abstract}
Several large scale networks, such as the backbone of the Internet, have been observed to behave like convex Riemannian manifolds of negative curvature. In particular, this paradigm explains the observed existence, for  networks of this type, of a ``congestion core'' through which a surprising large fraction of the traffic transits, while this phenomenon cannot be detected by purely local criteria. In this practical situation, it is important to estimate and predict the size and location of this congestion core. In this article we reverse the point of view and, motivated by the physical problem, we study congestion phenomena in the purely theoretical framework of convex Riemannian manifolds of negative curvature. In particular, we introduce a novel method of fair-division algorithm to estimate the size and impact of the congestion core in this context. 

\end{abstract}

\section{Introduction}
\label{cha:introduction}
	
\subsection{Congestion on networks}
	Traffic congestion problems are critical in the study of network transportation, from rush-hour traffic jams on city highways to routing data between internet users. With applications to internet traffic, biological and social sciences, and material transportation, understanding the key structural properties of large-scale data networks is crucial for analyzing and optimizing performances, and for improving security and reliability \cite{narayan}.
	
	In recent years, a great amount of empirical results have shown that many different types of data networks share features with negatively curved graphs with small hyperbolicity constants \cite{adcock2013tree, chen2013hyperbolicity, de2011treewidth, jonckheere2008scaled, fb2009, kennedy2013hyperbolicity, lohsoonthorn1969hyperbolic, narayan, shavitt2008hyperbolic}. A consequence of this, consistent with experimental data, is that a large percentage of the traffic between vertices (nodes) tends to go through a relatively small subset of the network. This approach is based on a common and broadly applied method using insights from Riemannian geometry to study large scale networks. In particular, E. Jonckheere, M. Lou, F. Bonahon and Y. Baryshnikov \cite{fb2009} used this paradigm to predict the existence of a congestion core in negatively curved networks, which turned out to be consistent with observational data in \cite{narayan}. 
	
	On a more theoretical level, V. Chepoi, F. Dragan and Y. Vax{\`e}s \cite{chepoi} proved a more quantitative result:  A Gromov $\delta$-hyperbolic space admits a congestion core which intercects at least one-half of all geodesics of the space. They also found that such a core admits a radius of $4\delta$. 
	
	Our goal is to better relate the congestion in a network to its geometric characteristics, such as its scale and curvature. In particular, we want to improve the quantitive measure of the density of congestion, namely, the percentage of all geodesic passing through a core, as well as providing methods to identify the location of the congestion core. 
	
	With is goal in mind, we reverse the point of view and, motivated by the congestion network problems, we consider similar properties for Riemannian manifolds. In particular, we exploit a completely new idea for this type of problem, borrowed from the general area of fair division algorithms; see the Fair-Cut Theorem~\ref{theoremoffaircut} below.

\subsection{The main theorem and its supporting properties}

Our more precise result currently requires that we consider manifolds of constant negative curvature. We believe that a similar property should hold for variable negative curvature. 

Recall that a Riemannian manifold is \emph{convex} if any two points are joined by a unique geodesic arc, whose interior is disjoint from the boundary of the manifold. In particular, a compact convex manifold is always diffeomorphic to a closed ball. 

\begin{thm}[Main Theorem]
\label{theoremofdensity}
Let $M$ be a compact convex $m$-dimensional Riemannian manifold with constant negative  sectional curvature $-k^2$, with $k>0$. Then, there exists a point $x_0\in M$ and a universal radius $r_0=\frac{1}{k}\log\left(\sqrt{2}+1\right)$, such that at least $\frac{1}{m+1}$ of all the geodesics of the manifold pass through the ball $B(x_0,r_0)$.
\end{thm}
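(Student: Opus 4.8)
The plan is to reduce ``the geodesic hits $B(x_0,r_0)$'' to an \emph{angular} condition at the single point $x_0$, and then to produce $x_0$ via the Fair-Cut Theorem~\ref{theoremoffaircut}, which will play the role of a Riemannian ``centerpoint''.

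Since $M$ has constant curvature $-k^2$ and is convex, it develops isometrically onto a compact convex domain of the model space $\mathbb{H}^m(-k^2)$, so I may argue with hyperbolic trigonometry. Fix $x_0\in M$, and for $p\in M\setminus\{x_0\}$ let $v_{x_0}(p)\in T^1_{x_0}M\cong S^{m-1}$ be the initial velocity of the geodesic from $x_0$ to $p$. The geometric core of the argument is the claim: \emph{if $P,Q\in M$ satisfy $\angle\bigl(v_{x_0}(P),v_{x_0}(Q)\bigr)\geq \pi/2$, then the geodesic arc $PQ$ meets $\overline{B(x_0,r_0)}$.} To see this, consider the geodesic triangle with vertices $P,x_0,Q$: its angle at $x_0$ is $\geq\pi/2$, and since the angle sum of a hyperbolic triangle is $<\pi$ the other two angles are $\leq\pi/2$; hence the foot $f$ of the perpendicular from $x_0$ to the line $\overline{PQ}$ lies on the arc $PQ$ and $d(x_0,PQ)=d(x_0,f)$. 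The two ends of the full geodesic through $P,Q$ subtend at $x_0$ the angle $2\Pi\bigl(d(x_0,f)\bigr)$, where $\Pi$ is the hyperbolic angle of parallelism, $\tan\Pi(d)=1/\sinh(kd)$; since $P$ and $Q$ lie on that line, $2\Pi\bigl(d(x_0,f)\bigr)\geq\angle\bigl(v_{x_0}(P),v_{x_0}(Q)\bigr)\geq\pi/2$, so $\Pi\bigl(d(x_0,f)\bigr)\geq\pi/4$ and hence $d(x_0,f)\leq r_0$. This is exactly where the stated value enters: $r_0=\frac{1}{k}\log(\sqrt2+1)$ is characterized by $\sinh(kr_0)=1$, i.e.\ by $\Pi(r_0)=\pi/4$.

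Next, observe that for a fixed unit vector $n\in T_{x_0}M$ the set $\{p\in M:\langle v_{x_0}(p),n\rangle\geq 0\}$ is precisely one of the two convex pieces cut off by the totally geodesic hyperplane through $x_0$ orthogonal to $n$; call such a set a \emph{geodesic half-space based at $x_0$}. For fixed $P$, the set $\{Q:\angle(v_{x_0}(P),v_{x_0}(Q))\geq\pi/2\}$ is, up to a null set, the closed geodesic half-space based at $x_0$ with inner normal $-v_{x_0}(P)$. Now invoke the Fair-Cut Theorem~\ref{theoremoffaircut}: it produces a point $x_0\in M$ such that every closed geodesic half-space based at $x_0$ captures at least a $\frac{1}{m+1}$ fraction of the ambient mass --- the Riemannian volume if a geodesic is sampled via an unordered pair of points of $M$, or the corresponding mass on $\partial M$ if the space of maximal geodesic arcs is parametrized by pairs of boundary endpoints. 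With this $x_0$ fixed, the second paragraph gives, for almost every first endpoint $P$: the event $\{\angle(v_{x_0}(P),v_{x_0}(Q))\geq\pi/2\}$, which has probability $\geq \frac{1}{m+1}$ over the second endpoint $Q$, forces the geodesic $PQ$ to meet $B(x_0,r_0)$. Integrating over $P$ by Fubini shows that at least $\frac{1}{m+1}$ of all geodesics pass through $B(x_0,r_0)$.

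The step I expect to be the real work is the geometric claim of the second paragraph executed carefully in constant curvature: verifying that the obtuse apex angle places the perpendicular foot on the arc rather than merely on the extended line, and bounding $d(x_0,f)$ by the angle through the angle-of-parallelism relation with the correct non-strict inequalities. The only other point needing care is matching the half-spaces delivered by the Fair-Cut Theorem with the geodesic half-spaces based at $x_0$ used here, and matching the ambient measure to the normalization implicit in the phrase ``fraction of all geodesics''.
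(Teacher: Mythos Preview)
Your proposal is correct and follows essentially the same route as the paper: pick $x_0$ via the Fair-Cut Theorem, show that for each $P$ the set of $Q$ with $[P,Q]\cap B(x_0,r_0)\neq\emptyset$ contains the half-space $H(x_0,v_P)$ (the paper's Blocked View Theorem~\ref{blockedviewtheorem}), and integrate over $P$ by Fubini. Your angle-of-parallelism computation for the obtuse triangle is just a direct variant of the paper's Lemmas~\ref{theradius}--\ref{comparisonlemma} (which reduce to the right-angle case and push the endpoints to infinity), and your measure normalization concern is resolved by the paper's convention that ``fraction of geodesics'' means volume on $M\times M$.
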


The dependence of the estimate on the dimension $m$ is certainly a flaw for applications to network. However, see Conjecture~\ref{conjectureoneovere} in \S \ref{sect:heuristics} for a conjectured uniform bound coming from our approach. 

Our proof of Theorem~\ref{theoremofdensity} relies on two intermediate steps. The first one uses the following fundamental property of spaces of negative curvature. 

In a convex Riemannian manifold $M$ of negative curvature $M$, a point $x\in M$ and a unit vector $v \in T_x^1 M$ determine a \emph{half-space} $H(x,v)$, consisting of all $y\in M$ such that the geodesic $[x,y]$ makes an angle $\leq \frac\pi2$ with $v$ at $x$. 
	
\begin{thm}[Blocked View Theorem]
\label{theoremofblockradius}
Let $M$ be a convex Riemannian manifold of negative sectional curvature bounded above by $-k^2$, with $k>0$. Then, there exist a universal radius $r_0=\frac{1}{k}\log\left(\sqrt{2}+1\right)$ satisfying the following property: for every $x$, $p \in M$, the set of $q \in M$ such that the geodesic $[p,q]$ meets the ball $B(x,r_0)$ contains the whole half-space $H(x,v_p)$, where $v_p$ is the unit tangent vector of the geodesic $[p,x]$ at $x$.
\end{thm}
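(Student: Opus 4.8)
The plan is to identify $H(x,v_p)$ as exactly the set of $q$ for which the geodesic triangle $\triangle xpq$ has angle at least $\frac{\pi}{2}$ at the vertex $x$, and then to prove that every such triangle has its side opposite to $x$ passing within distance $r_0$ of $x$. Fix $x\neq p$ (if $p=x$ there is nothing to prove) and $q\in H(x,v_p)$ with $q\neq x$ (if $q=x$ then $x\in[p,q]$ and we are done). The tangent of $[x,p]$ at $x$ is $-v_p$ (here $v_p$ is the velocity at $x$ of the parametrization running from $p$ to $x$), while by definition of $H(x,v_p)$ the tangent of $[x,q]$ at $x$ makes an angle $\le\frac{\pi}{2}$ with $v_p$; hence the interior angle $\alpha$ of $\triangle xpq$ at $x$ satisfies $\alpha\ge\frac{\pi}{2}$. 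It therefore suffices to show $d\bigl(x,[p,q]\bigr)<r_0$.

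First I would pass to a comparison triangle. Because $M$ is convex and has sectional curvature $\le -k^2<0$, it is a $\mathrm{CAT}(-k^2)$ space (the Cartan--Hadamard theorem applies once one checks, using convexity, that geodesics are globally unique and that the interiors of geodesic segments avoid $\partial M$). Let $\bar\triangle=\triangle\bar x\bar p\bar q$ be a comparison triangle for $\triangle xpq$ in the model plane $M_{-k^2}$ of constant curvature $-k^2$, i.e.\ with the same three side lengths. By the angle comparison built into the $\mathrm{CAT}(-k^2)$ condition, the angle of $\bar\triangle$ at $\bar x$ is at least $\alpha\ge\frac{\pi}{2}$.

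The next step is a self-contained computation in the model plane. In $M_{-k^2}$, if a geodesic triangle has angle $\ge\frac{\pi}{2}$ at a vertex $\bar x$, then by Gauss--Bonnet its two other angles are both strictly acute, so the foot $\bar w$ of the perpendicular from $\bar x$ to the opposite side lies in the interior of that side. Rescaling to curvature $-1$ and dropping this altitude splits the angle at $\bar x$ into two angles $\beta_1,\beta_2$ with $\beta_1+\beta_2\ge\frac{\pi}{2}$, hence $\max(\beta_1,\beta_2)\ge\frac{\pi}{4}$; in either of the two right triangles so produced, the hyperbolic right-triangle identity $\cos\beta_i=\tanh\bigl(d(\bar x,\bar w)\bigr)/\tanh(\text{hypotenuse})$ together with $\tanh(\text{hypotenuse})<1$ yields $\tanh d(\bar x,\bar w)<\cos\beta_i$. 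Taking $i$ with $\beta_i\ge\frac{\pi}{4}$ gives $d(\bar x,\bar w)<\operatorname{arctanh}\bigl(\cos\tfrac{\pi}{4}\bigr)=\operatorname{arctanh}\bigl(1/\sqrt2\bigr)=\log(\sqrt2+1)$, so after undoing the rescaling $d(\bar x,\bar w)<\frac1k\log(\sqrt2+1)=r_0$.

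Finally I would transport $\bar w$ back to $M$: let $w\in[p,q]$ be the point whose comparison point is $\bar w$ (well defined since $\bar w$ is interior to $[\bar p,\bar q]$). Applying the $\mathrm{CAT}(-k^2)$ distance comparison to the vertex $x$ and the point $w$ on the opposite side gives $d(x,w)\le d(\bar x,\bar w)<r_0$, so $w\in[p,q]\cap B(x,r_0)$. Since $q$ was an arbitrary point of $H(x,v_p)$, this proves the theorem. The one genuinely delicate point is the justification that a convex manifold-with-boundary of curvature $\le-k^2$ is $\mathrm{CAT}(-k^2)$; everything after that is routine comparison geometry and hyperbolic trigonometry, the only real idea being to pull the foot of the perpendicular back from the model triangle instead of using the nearest-point projection of $x$ onto $[p,q]$ in $M$, whose comparison point need not be close to $\bar x$.
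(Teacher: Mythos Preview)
Your proof is correct and follows the same comparison-geometry strategy as the paper, but with a more streamlined execution that is worth pointing out.

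The paper first intersects $[p,q]$ with the hyperplane $\partial H(x,v_p)$ to obtain a point $z$, so that the triangle $xpz$ has a \emph{right} angle at $x$; it then runs a two-step comparison (a CAT$(-k^2)$ inequality together with Toponogov's theorem, packaged as its Lemma~5) to a constant-curvature right triangle with the same leg lengths, and finally bounds the altitude of that model triangle by a limiting argument in the Poincar\'e disk (Lemma~4). You bypass the auxiliary point $z$ entirely by allowing an obtuse angle at $x$, use a single CAT$(-k^2)$ comparison with the same \emph{side lengths}, and replace the limiting argument by the explicit right-triangle identity $\cos\beta=\tanh h/\tanh c$, which gives the sharp bound $\operatorname{arctanh}(1/\sqrt2)=\log(\sqrt2+1)$ directly. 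Your device of transporting the foot of the perpendicular $\bar w$ back to $M$, rather than projecting in $M$ first, is exactly the clean way to apply the CAT inequality here. The only step you flag as delicate---that a convex manifold with curvature $\le -k^2$ is CAT$(-k^2)$---is indeed the one substantive input; the paper handles the equivalent content through its Lemma~5 without naming the CAT property explicitly.
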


In other words, the view of $H(x,v_p)$ from the point $p$ is completely blocked by the ball $B(x,r_0)$.  Such a property clearly fails if the curvature is allowed to approach 0. 
	
	This leads us to investigate the volumes of half-spaces $H(x,v)$ in $M$. In Section \ref{cha:thefaircutofapie}, we introduce a geometric quantity that we call the \emph{fair-cut index} of the manifold $M$. It is defined as
\begin{equation*}
	\Phi(M)=\max_{x\in M}  \min_{v\in T_x^1M} \frac{\Vol{H(x,v)}}{\Vol M},
\end{equation*}

The next big idea in the proof of Theorem~\ref{theoremofdensity} is the following. 

\begin{thm}[Fair-Cut Theorem]
\label{theoremoffaircut}
	Let $M$ be a compact convex $m$-dimensional Riemannian manifold with non-positive  constant sectional curvature that is also compact and convex. Then,
\begin{equation*}
		\Phi(M)\geq\frac{1}{m+1}.
\end{equation*}
\end{thm}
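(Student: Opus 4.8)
The plan is to recognize this statement as the Riemannian analogue of the classical \emph{centerpoint theorem} and to deduce it from the Euclidean and hyperbolic cases through the model-space realization of $M$.

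First I would fix a model space $X$: if the curvature is $0$, take $X=\mathbb{E}^m$; if it is $-k^2<0$, take $X=\mathbb{H}^m$ of curvature $-k^2$. Since $M$ is compact and convex, it is isometric to a compact convex subset of $X$ with nonempty interior, and by convexity the geodesic arc in $M$ joining two of its points coincides with the $X$-geodesic between them. Hence, for $x\in M$ and $v\in T_x^1M$, one has $H(x,v)=\mathcal{H}(x,v)\cap M$, where $\mathcal{H}(x,v)\subset X$ is the closed half-space bounded by the totally geodesic hyperplane through $x$ orthogonal to $v$, taken on the side of $v$. The one genuinely geometric input is that $\mathcal{H}(x,v)$, hence $H(x,v)$, is geodesically convex, and this is exactly where constant curvature enters: in the Euclidean case it is clear, and in the hyperbolic case it follows from the Beltrami--Klein model, in which $\mathbb{H}^m$ becomes an open Euclidean ball, geodesics become chords, and totally geodesic hyperplanes become affine hyperplanes, so $\mathcal{H}(x,v)$ becomes the intersection of that ball with an affine half-space.

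Next I would run the centerpoint argument for the Borel probability measure $\mu$ on $X$ obtained by normalizing Riemannian volume on $M$. The target is a point $x_0\in M$ such that $\mu(\mathcal{H})\geq\frac1{m+1}$ for every closed half-space $\mathcal{H}\subset X$ containing $x_0$. To produce it, let $\mathcal{F}$ be the family of closed half-spaces $\mathcal{H}$ of $X$ with $\mu(\mathcal{H})>\frac m{m+1}$, and consider the compact convex sets $\{\mathcal{H}\cap M:\mathcal{H}\in\mathcal{F}\}$. Any $m+1$ of them have a common point: the union of the $m+1$ complementary open half-spaces has $\mu$-measure less than $(m+1)\cdot\frac1{m+1}=1=\mu(M)$, so it cannot cover $M$. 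By Helly's theorem for convex subsets of $X$ --- which, via the Beltrami--Klein model, is Helly's theorem in $\mathbb{R}^m$ --- together with compactness, $\bigcap_{\mathcal{H}\in\mathcal{F}}(\mathcal{H}\cap M)\neq\varnothing$; choose $x_0$ in this set, so in particular $x_0\in M$. If some closed half-space $\mathcal{G}\ni x_0$ had $\mu(\mathcal{G})<\frac1{m+1}$, then its open complement would have $\mu$-measure $>\frac m{m+1}$; since Riemannian volume assigns measure zero to every totally geodesic hyperplane, the $\mu$-measure of a half-space varies continuously as one translates its bounding hyperplane along the perpendicular geodesic, so one could slide that hyperplane slightly into the complement and get a closed half-space $\mathcal{G}'$ disjoint from $\mathcal{G}$ with $\mu(\mathcal{G}')>\frac m{m+1}$, i.e. $\mathcal{G}'\in\mathcal{F}$; then $x_0\in\mathcal{G}'$, contradicting $x_0\in\mathcal{G}$.

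With such an $x_0$ in hand the theorem follows quickly: for every $v\in T_{x_0}^1M$ the half-space $\mathcal{H}(x_0,v)$ contains $x_0$, so $\Vol H(x_0,v)=\mu(\mathcal{H}(x_0,v))\cdot\Vol M\geq\frac1{m+1}\Vol M$, whence $\min_{v}\Vol H(x_0,v)/\Vol M\geq\frac1{m+1}$ and therefore $\Phi(M)\geq\frac1{m+1}$. I expect the only delicate points to be the transfer of Helly's theorem to the hyperbolic model --- handled cleanly by the Beltrami--Klein model together with the compactness obtained from intersecting with $M$ --- and the open-versus-closed half-space bookkeeping, which is controlled by the vanishing of Riemannian volume on totally geodesic hyperplanes.
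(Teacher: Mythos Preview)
Your argument is correct, and it is genuinely different from the paper's. You recognize the statement as the Rado centerpoint theorem for the normalized volume measure, transfer the problem to Euclidean convex geometry via the Beltrami--Klein model, and produce the point $x_0$ by Helly's theorem applied to the family of ``heavy'' half-spaces; the sliding step and the measure-zero observation for hyperplanes cleanly handle the open/closed bookkeeping. The paper instead works intrinsically: it takes $x_0$ to be a maximizer of $\varphi_M$, proves by a perturbation argument (using a Lipschitz estimate for $\Vol H(\cdot,\cdot)$ together with the constant-curvature fact that a half-space can be pushed strictly inside another) that the minimizing half-spaces at $x_0$ cover $M$, and then invokes Carath\'eodory's theorem to extract at most $m+1$ of them, giving $\Vol M\leq (m+1)\Phi(M)\Vol M$. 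Your route is shorter and identifies the classical source of the bound, and it makes explicit that constant curvature is used only to linearize half-spaces via the Klein model. The paper's route, on the other hand, yields extra structural information---Proposition~\ref{propcovering} says that the minimizing directions at a fair-cut center cover $M$---which is reused later (e.g.\ in Proposition~\ref{prop:FairCutCentersConvex} on convexity of the set of fair-cut centers); your Helly argument does not directly give this. In short, Helly and Carath\'eodory are playing dual roles here, and both proofs hinge on the same convex-geometric phenomenon.
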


Although our proof currently requires the curvature to be constant, this hypothesis is likely to be unnecessary. 
	
	A point $x_0$ where the maximum $\Phi(M)$ is attained is a \textit{fair-cut center} for $M$. The Fair-Cut Theorem can be rephrased as saying that every hyperplane passing through a fair-cut center cuts $M$ into pieces whose volume is at least $\frac{1}{m+1}$ times the volume of the whole manifold.
	
	In Proposition~\ref{prop:FairCutCentersConvex}, we show that the fair-cut centers form a convex subset of $M$. In practice, this set is very often reduced to a single point, and the fair-cut center is unique.

\section{Counting geodesics}
\subsection{Counting geodesics on a graph}

To motivate the Riemannian setup, we first consider the case of graphs (or networks), which provides the motivation for this work. 

A  \emph{graph} $G$ consists of a set $V$ of \emph{vertices} (or \emph{nodes}) and a set $E$ of \emph{edges} (or \emph{links}), such that every edge connects two vertices. The graph is \emph{connected} if any two vertices $p$, $q\in V$ can be connected by a  path in $G$, namely by a finite sequence of edges such that any two consecutive edges share an endpoint.

If we assign a positive length to each edge of $E$ (for instance a uniform length $1$), this defines a \emph{length} for each path in $G$, namely the sum of the lengths of the edges in the path. This defines the metric on the vertex set $V$, where the distance  between  two vertices is the shortest length of a path joining them. A path in $G$ is \emph{geodesic} if its length is  shortest among all paths connecting its endpoints. We are interested in the set $\Gamma(G)$ of (oriented) geodesics of $G$. 

%Consider the set
%\begin{equation}
%\Gamma(G)=\big\{\gamma \text{ is a geodesic connecting } p \text{ and } q; ~(p,q)\in V\times V \big\}.
%\end{equation}
%The size of set $\Gamma(G)$ measures the amount of all geodesics on $G$.

An important case is when the geodesic connecting two vertices $p$, $q\in V$ is unique. In this case, we can label this geodesic as $[p,q]$. Then, the set $\Gamma(G)$ is then the same as the square $V\times V$ of the vertex set $V$. In particular,  
\begin{equation*}
\left|\Gamma(G)\right|=|V|^2,
\end{equation*}
where $|\cdot|$ measures the size of a set.

To study  congestion phenomena in a connected graph $G$, we want to count the number of geodesics on a graph $G=(V,E)$ that pass through a given vertex $x\in V$, or more generally near that vertex, and compare it to the total number of geodesics. 

With this in mind, we introduce  the set
\begin{equation*}
C(x)= \{ \gamma \in \Gamma(G); x\in\gamma\}
\end{equation*}
of \emph{geodesic traffic} passing through the vertex $x \in V$, as well as, for $r>0$, the \emph{geodesic traffic set through the ball $B(x,r)$} 
	\begin{equation*}
	C(x,r)=\{\gamma \in \Gamma(G) ; d(x,\gamma) \leq r \}.
	\end{equation*}
Here	$d(x,\gamma) $ denotes the shortest distance between $x$ and a vertex of the path $\gamma$. 

These are quantified by the numbers
\begin{align*}
D(x)&=\frac{|C(x)|}{|\Gamma(G)|}
&
D(x,r)&=\frac{|C(x,r)|}{|\Gamma(G)|}
\end{align*}
which measure the density of traffic passing through the vertex $x$, or through the ball $B(x,r)$. 

In this discrete setting, the sets $C(x)$ and $C(x,r)$ of course coincide when $r$ is less than the length of the shortest edge of $G$. 

\subsection{Counting geodesics in a Riemannian manifold}
\label{sect:CountingGeodesicsRiemannian}
We want to extend these ideas from graphs and networks to  Riemannian manifolds.

Let $M$ be a compact Riemannian manifold with boundary. It is \emph{convex} if any two points $p$, $q\in M$ can be connected by a unique geodesic $[p,q]$, meeting the boundary only at its endpoints (if at all). In particular, $M$ is then diffeomorphic to a closed ball. 

In this case, we can identify the set  $\Gamma(M)$ of geodesics of $M$ to the product $M \times M$, and it makes sense to quantify the size of $\Gamma(M)$ as  $|\Gamma(M)|=(\Vol{M})^2$ where $\Vol M$ is the volume of $M$. 

By analogy with the case of graphs, we then introduce the \emph{geodesic traffic set through the ball $B(x,r)$} as
\begin{equation*}
C(x,r)=\{ (p,q) \in M \times M; [p,q]  \cap B(x,r)\neq \emptyset \},
\end{equation*}
and 
the \emph{density of the geodesic traffic passing through $B(x,r)$} 
\begin{equation*}
D(x,r)=\frac{\Vol{C(x,r)}}{(\Vol{M})^2}
\end{equation*}
where, for the volume form $d\mu$ of $M$,  the volume $\Vol{C(x,r)}$ is defined by
\begin{equation}
\label{deadintegral} 
\Vol{C(x,r)}=\int_{C(x,r)} d\mu(p)\, d\mu(q).
\end{equation}

Note that, in this manifold setting, we are not interested in the geodesic traffic set passing through a single point, since it has measure 0 for the the volume form of $M\times M$  (except in the trivial case where $\dim(M)\leq1$).

\section{The Blocked View Theorem}
\label{cha:theblockingradius}

We will restrict our attention to Riemannian manifolds of negative sectional curvature, which is the main framework where convexity occurs and is stable under perturbation. The Blocked View Theorem below is typical of negative curvature, and will provide a key estimate for our analysis. 

We begin with a definition. Recall that $[p,q]$ denotes the geodesic arc going from $p$ to $q$. 

\begin{definition}[The blocked view]
\label{blockedview}
Let $M$ be a compact convex  Riemannian manifold. For $p$, $x\in M$ and a radius  $r>0$ the \emph{blocked view set}  $C_p(x,r)$ is
	\begin{equation*}
	C_p(x,r)=\big\{q\in M; [p,q]\cap B(x,r)\neq\emptyset\big\}.
	\end{equation*}
\end{definition}

In other words, $C_p(x,r)$ is the set of points $q$ whose view from $p$ is blocked by the ball $B(x,r)$. 

Then, Equation (\ref{deadintegral}) can be rewritten as
\begin{equation}
\label{eqblockview}
\Vol{C(x,r)}  =\int_{p\in M}\Vol{C_p(x,r)}~d\mu(p).
\end{equation}

A point $x\in M$ and a unit tangent vector $v\in T_x^1M$ determine a \emph{half-space}
\begin{equation*}
H(x,v)=\big\{q\in M;~\langle v_q,v\rangle\leq0 \text{ for  the tangent vector } v_q \text{ of }[q,x]\text{ at }x\big\}.
\end{equation*}

\begin{thm}[Blocked View Theorem]
	\label{blockedviewtheorem}
Let $M$ be a compact convex Riemannian manifold whose sectional curvature is bounded above by $-k^2$ with $k>0$. Then, there exists a universal radius $r_0 =\frac{1}{k}\log(\sqrt{2}+1)$ such that, for any two distinct points $x$, $p\in M$, the blocked view set $C_p(x,r_0)$  contains the  half-space $H(x,v_p)$ determined by the tangent vector $v+p$ of the geodesic $[p,x]$ at $x$.
\end{thm}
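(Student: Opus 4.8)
The plan is to translate the half-space hypothesis into a statement about the angles of a geodesic triangle, and then to reduce everything to one elementary estimate in the model plane $\mathbb{H}^2_{-k^2}$ of constant curvature $-k^2$.

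First I would fix distinct points $x,p\in M$ and a point $q\in H(x,v_p)$. If $q=x$, or if $p$, $q$, $x$ are collinear with $x$ between them, then $x$ already lies on $[p,q]$ and there is nothing to prove, so assume the geodesic triangle $pqx$ is nondegenerate; all three of its sides lie in $M$ because $M$ is convex. By the definition of $H(x,v_p)$, the tangent vector of $[p,x]$ at $x$ and the tangent vector of $[q,x]$ at $x$ have non-positive inner product; since these vectors are the negatives of the initial directions at $x$ of $[x,p]$ and $[x,q]$, the interior angle $\angle pxq$ of the triangle at $x$ is at least $\frac{\pi}{2}$. Hence it suffices to prove the following: in any geodesic triangle of $M$ with a non-acute angle at a vertex, the side opposite that vertex meets the ball of radius $r_0=\frac1k\log(\sqrt2+1)$ centered at it.

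For this I would use triangle comparison. Since $M$ is convex with sectional curvature bounded above by $-k^2$, the triangle $pqx$ is at least as thin as its comparison triangle $\bar p\bar q\bar x$ in $\mathbb{H}^2_{-k^2}$ with the same side lengths: the angle at $\bar x$ satisfies $\angle\bar x\geq\angle pxq\geq\frac{\pi}{2}$, and, writing $\bar F$ for the foot of the perpendicular from $\bar x$ to $[\bar p,\bar q]$ and $F$ for the point of $[p,q]$ with $d_M(p,F)=d_{\mathbb{H}}(\bar p,\bar F)$, one gets $d_M(x,[p,q])\leq d_M(x,F)\leq d_{\mathbb{H}}(\bar x,\bar F)=d_{\mathbb{H}}(\bar x,[\bar p,\bar q])$. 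So it remains to bound, in $\mathbb{H}^2_{-k^2}$, the distance from a vertex of angle $\geq\frac{\pi}{2}$ to the opposite side. In such a triangle $PXQ$ the angle sum is less than $\pi$, so the angles at $P$ and $Q$ are acute and the foot $F$ of the perpendicular from $X$ falls strictly inside $[P,Q]$; writing $\theta_1=\angle FXP$ and $\theta_2=\angle FXQ$, we have $\theta_1+\theta_2=\angle PXQ\geq\frac{\pi}{2}$ with $\theta_1,\theta_2<\frac{\pi}{2}$. Applying the relation for a hyperbolic right triangle in each half, $\tanh\bigl(k\,d(X,F)\bigr)=\cos\theta_i\cdot\tanh\bigl(k\,d(X,P_i)\bigr)\leq\cos\theta_i$ (with $P_1=P$ and $P_2=Q$), whence $\tanh\bigl(k\,d(X,F)\bigr)\leq\cos\bigl(\max(\theta_1,\theta_2)\bigr)\leq\cos\frac{\pi}{4}=\tfrac1{\sqrt2}$, because $\max(\theta_1,\theta_2)\geq\tfrac12(\theta_1+\theta_2)\geq\frac{\pi}{4}$. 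Since $\operatorname{arctanh}(1/\sqrt2)=\log(\sqrt2+1)$, this gives $d(X,F)\leq\frac1k\log(\sqrt2+1)=r_0$; combined with the comparison inequality, $d_M(x,[p,q])\leq r_0$, so $[p,q]$ meets $B(x,r_0)$ and $q\in C_p(x,r_0)$. For a genuine triangle the inequality is in fact strict, since $\tanh(k\,d(X,P_i))<1$; the bound $r_0$ is only approached in the limiting ideal right triangle, which is also why it blows up, and the statement fails, as $k\to0$.

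I expect the comparison step to be the only real obstacle. Because $M$ has a boundary, the Cartan--Hadamard theorem does not literally apply to conclude that $M$ is a CAT($-k^2$) space, so the triangle comparison must be extracted from convexity. The natural route is to note that the geodesics from $x$ to the points of $[p,q]$ all stay in $M$ and sweep out a disk filling the triangle $pqx$, along which the ambient sectional curvature is $\leq -k^2$ --- and, the rulings being geodesics, along which the induced Gauss curvature is also $\leq -k^2$ by the Gauss equation --- so that the required thinness follows from the comparison theorem for surfaces of curvature bounded above. Everything else, namely the angle translation and the hyperbolic right-triangle computation, is routine.
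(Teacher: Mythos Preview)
Your proof is correct and follows the same overall strategy as the paper --- convert the half-space condition into an angle condition at $x$, invoke a CAT($-k^2$) triangle comparison, and finish with an estimate in the model plane --- but the execution differs in two places worth noting. The paper first introduces an auxiliary point $z\in[p,q]\cap\partial H(x,v_p)$ so as to obtain a triangle $xpz$ with an \emph{exact} right angle at $x$, and then compares it (Lemma~\ref{comparisonlemma}) with a model right triangle having the same leg lengths; you instead work directly with the possibly obtuse triangle $pqx$ and use the standard same-side-length comparison triangle. For the model-space bound, the paper pushes the non-right-angle vertices to the circle at infinity and reads off the distance in the limiting ideal right triangle (Lemma~\ref{theradius}), whereas you drop a perpendicular, split into two right triangles, and combine the identity $\tanh(k\cdot\text{adjacent})=\tanh(k\cdot\text{hypotenuse})\cos\theta$ with the observation $\max(\theta_1,\theta_2)\geq\pi/4$. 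Your route is a bit more self-contained (no limiting argument, no second comparison step), while the paper's route makes the extremal configuration --- the ideal right triangle --- visible. Your closing remark about justifying the comparison via the geodesic-ruled filling surface and the Gauss equation is a valid way to handle the boundary issue; the paper simply cites Bridson--Haefliger at the corresponding step.
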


In other words, the view of the whole half-space $H(x,v_p)$ from the point $p\in M$ is completely blocked by the ball $B(x,r_0)$. 
We call the universal radius $r_0 =\frac{1}{k}\log(\sqrt{2}+1)$ the \emph{blocking radius} corresponding to the curvature bound $-k^2$.

The proof of Theorem~\ref{blockedviewtheorem}  is based on the following two lemmas. 

\begin{lem}
 \label{theradius}
 In the $m$-dimensional space $H^m_{k^2}$ of constant curvature $-k^2$, consider two geodesics $[\bar x, \bar y]$ and $[\bar x, \bar z]$ making a right angle at $\bar x$. Then, the distance from $\bar x$ to the geodesic $[\bar y,\bar z]$ is uniformly bounded by $\frac1k \log\left( \sqrt2 +1 \right)$. 
 \end{lem}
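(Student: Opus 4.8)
The plan is to reduce the statement, by scaling and by passing to a totally geodesic plane, to the computation of a single distance in an ideal right-angled triangle of the hyperbolic plane, and then to carry out that computation in a standard model.

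First I would rescale the metric so that $k=1$; the claimed bound then becomes $\log(\sqrt2+1)$, and undoing the rescaling at the end reintroduces the factor $\frac1k$. The three points $\bar x$, $\bar y$, $\bar z$ lie in a unique totally geodesic $2$-plane $P\subset H^m_{k^2}$, and $P$ also contains the geodesic $[\bar y,\bar z]$ together with the foot of the perpendicular dropped from $\bar x$ onto it; since $P$ is isometric to the hyperbolic plane $\mathbb{H}^2$ of curvature $-1$, we may assume $m=2$.

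Next I would replace the finite triangle by an ideal one. Let $\xi$ and $\eta$ be the points at infinity of the geodesic rays issued from $\bar x$ through $\bar y$ and through $\bar z$ respectively, and let $T$ be the closed ideal triangle with vertices $\bar x$, $\xi$, $\eta$, which has a right angle at $\bar x$. The segment $[\bar y,\bar z]$ is a chord of $T$ joining an interior point of the side $[\bar x,\xi)$ to an interior point of the side $[\bar x,\eta)$, so it splits $T$ into a compact triangle with vertex $\bar x$ and a region containing the ideal side $[\xi,\eta]$. Since $T$ is convex, every geodesic from $\bar x$ to a point of $[\xi,\eta]$ stays inside $T$ and hence meets $[\bar y,\bar z]$; therefore
\begin{equation*}
d\bigl(\bar x,[\bar y,\bar z]\bigr)\leq d\bigl(\bar x,[\xi,\eta]\bigr),
\end{equation*}
and it suffices to bound the distance from $\bar x$ to the opposite side in the limiting ideal configuration.

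Finally I would compute $d(\bar x,[\xi,\eta])$ in the upper half-plane model of $\mathbb{H}^2$. After applying an isometry I may take $\bar x=i$, with the two perpendicular rays running up the positive imaginary axis (so $\xi=\infty$) and along the arc of the unit circle from $i$ towards $1$ (so $\eta=1$). Then $[\xi,\eta]$ is the vertical geodesic $\{\operatorname{Re}z=1\}$, its common perpendicular with the point $i$ is the arc of the circle centred at the real point $1$ of radius $\sqrt2$, and integrating the length element $|dz|/\operatorname{Im}z$ along this arc from $i$ to its endpoint $1+\sqrt2\,i$ gives $\log\tan\frac{3\pi}{8}=\log(\sqrt2+1)$. (As a check, in the disc model with $\bar x$ at the centre the geodesic $[\xi,\eta]$ is a Euclidean circular arc at Euclidean distance $\sqrt2-1$ from the origin, which is the hyperbolic distance $\log\frac{1+(\sqrt2-1)}{1-(\sqrt2-1)}=\log(\sqrt2+1)$.) Rescaling back gives the bound $\frac1k\log(\sqrt2+1)$.

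The only step carrying any real content is the reduction to the ideal triangle; the rest is a one-line model computation. A purely computational alternative would be to express the altitude $h$ from the right-angle vertex of a finite right triangle with legs $a,b$ through the relations $\cosh a=\cosh h\cosh p$, $\cosh b=\cosh h\cosh q$ with $p+q$ the hypotenuse, and to verify that $h$ increases to $\log(\sqrt2+1)$ as $a,b\to\infty$; I expect the convexity argument above to be cleaner, since it sidesteps the explicit trigonometry.
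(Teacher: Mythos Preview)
Your proof is correct and follows essentially the same route as the paper: rescale to $k=1$, pass to a totally geodesic $2$-plane, reduce to the ideal right triangle, and compute the distance explicitly. The only cosmetic differences are that the paper invokes monotonicity (``moving $\bar y$ and $\bar z$ away from $\bar x$ increases the distance'') where you use a convexity/separation argument, and carries out the final computation in the Poincar\'e disk rather than the upper half-plane.
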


\begin{proof} After rescaling the metric and restricting attention to a totally geodesic plane containing the three points $\bar x$, $\bar y$ and $\bar z$, we can arrange that $k=1$ and $m=2$, and identify 
 $H^2_{1}$ to the Poincar\'e disk model for the hyperbolic plane. After applying a suitable isometry, we can in addition assume that $\bar x$ coincides with the center $O$ of the disk. Also, moving $\bar y$ and $\bar z$ away from $x$ increases the distance from $\bar x$ to $[\bar y,\bar z]$. It therefore suffices to consider the case where $\bar y$ and $\bar z$ are in the circle at infinity $\partial_\infty H^2_{1}$. In this special case, a simple computation in the Poincar\'e model shows that the distance from $x=O$ to $[\bar y,\bar z]$ is exactly equal to $\log\left( \sqrt2 +1 \right)$. This provides the required bound in the general case. 
\end{proof}

\begin{lem}
\label{comparisonlemma}
	Let $M$ be a convex Riemannian manifold whose sectional curvature is uniformly bounded above by $ -k^2<0$. Given a geodesic triangle $xyz$ in $M$ with a right angle at $x$ consider, in the space $H^m_{k^2}$ of constant curvature $-k^2$, a triangle $ \bar{x}\bar{z}\bar{p}$ with a right angle at $\bar x$ and whose legs are such that $d(x,y)=d(\bar{x},\bar{y})$ and $d(x,z)=d(\bar{x},\bar{z})$. Then, the distance from $x$ to the geodesic $[y,z]$ is less than or equal to the distance from $\bar{x}$ to $[\bar{y},\bar{z}]$. Namely, 
	\begin{equation*}
	d\big(x,[y,z]\big)\leq d \big(\bar{x},[\bar{y}, \bar{z}] \big).
	\end{equation*}
\end{lem}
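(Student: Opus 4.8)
The plan is to drop the perpendicular from $x$ to the opposite side $[y,z]$, and to compare the two right triangles so produced — together with the original right angle at $x$ — against the analogous configuration in $H^m_{k^2}$, using the standard comparison inequalities valid when the sectional curvature is bounded above by $-k^2$. First I would check that the nearest point $w\in[y,z]$ to $x$ lies in the \emph{interior} of $[y,z]$: the minimizing geodesic from $x$ to $[y,z]$ meets $[y,z]$ orthogonally at $w$, so if $w$ were an endpoint, say $w=y$, the geodesic triangle $xyz$ would carry right angles at both $x$ and $y$, which is impossible in negative curvature, where the angle sum of a geodesic triangle is $<\pi$. Put $a=d(x,y)$, $b=d(x,z)$, $d=d(x,w)=d\big(x,[y,z]\big)$, $s=d(y,w)$ and $t=d(z,w)$, so that $s+t=d(y,z)$ and, by the first variation formula, both geodesic triangles $xyw$ and $xzw$ have a right angle at $w$.

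Next I would apply the comparison theorem in its hinge form: given two geodesic sides issuing from a common vertex with a prescribed angle between them, the length of the third side in $M$ is at least its length in $H^m_{k^2}$. For the right angles at $w$ this yields $\cosh(ka)\geq\cosh(kd)\cosh(ks)$ and $\cosh(kb)\geq\cosh(kd)\cosh(kt)$ (the hyperbolic Pythagorean relation, now as an inequality), and for the right angle at $x$ it yields $d(y,z)\geq\bar c$, where $\bar c=d(\bar y,\bar z)$ is the hypotenuse of the model right triangle, so $\cosh(k\bar c)=\cosh(ka)\cosh(kb)$. In that model triangle, dropping the perpendicular from $\bar x$ to $[\bar y,\bar z]$ with foot $\bar w$ and setting $\bar d=d(\bar x,\bar w)=d\big(\bar x,[\bar y,\bar z]\big)$, $\bar s=d(\bar y,\bar w)$, $\bar t=d(\bar z,\bar w)$, the same relations hold with equality: $\cosh(ka)=\cosh(k\bar d)\cosh(k\bar s)$, $\cosh(kb)=\cosh(k\bar d)\cosh(k\bar t)$ and $\bar s+\bar t=\bar c$.

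The conclusion then comes from a one-variable monotonicity argument. For $0<\rho<\min(a,b)$ define $\sigma(\rho),\tau(\rho)>0$ by $\cosh(k\sigma(\rho))=\cosh(ka)/\cosh(k\rho)$ and $\cosh(k\tau(\rho))=\cosh(kb)/\cosh(k\rho)$; both are strictly decreasing in $\rho$, hence so is $f:=\sigma+\tau$. From the previous step $s\leq\sigma(d)$ and $t\leq\tau(d)$, so $\bar c\leq d(y,z)=s+t\leq f(d)$, while the model equalities give $f(\bar d)=\bar s+\bar t=\bar c$. Since $d$ and $\bar d$ both lie in $(0,\min(a,b))$ — each is strictly less than $a$ and than $b$ because the associated quantities $s,\bar s,t,\bar t$ are positive — and $f$ is strictly decreasing there, $f(d)\geq f(\bar d)$ forces $d\leq\bar d$, which is exactly the desired inequality $d\big(x,[y,z]\big)\leq d\big(\bar x,[\bar y,\bar z]\big)$.

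I expect the only genuine difficulty to be bookkeeping: making sure the comparison inequalities point the right way — each right angle must give a \emph{lower} bound for the side opposite it in $M$ relative to $H^m_{k^2}$ — and confirming that the perpendicular foot is interior so that the sub-triangles are honest right triangles; granting this, the monotonicity of $f$ is routine. A fully equivalent alternative would bypass the explicit hyperbolic trigonometry by invoking the $\mathrm{CAT}(-k^2)$ inequality for $M$ directly: take the perpendicular foot $\bar w$ in the model triangle, transport it to the point $w'\in[y,z]$ lying at the same distance from $y$, and combine $d(x,w')\leq d(\bar x,\bar w)$ with the trivial bound $d\big(x,[y,z]\big)\leq d(x,w')$ — the same idea in a more synthetic form.
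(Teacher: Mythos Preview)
Your main argument is correct and takes a genuinely different route from the paper's. You drop the altitude from $x$ in both $M$ and the model, apply the hinge form of the $\mathrm{CAT}(-k^2)$ comparison three times (at $w$ twice and at $x$ once) to obtain $\cosh(ka)\geq\cosh(kd)\cosh(ks)$, $\cosh(kb)\geq\cosh(kd)\cosh(kt)$ and $s+t\geq\bar c$, and then finish with the monotonicity of $f(\rho)=\sigma(\rho)+\tau(\rho)$. The paper instead inserts an intermediate comparison triangle $x'y'z'$ in $H^m_{k^2}$ having the \emph{same three side lengths} as $xyz$: the $\mathrm{CAT}(-k^2)$ inequality then gives $d(x,[y,z])\leq d(x',[y',z'])$ in one stroke, and a separate synthetic argument in the Poincar\'e disk---overlaying $x'y'z'$ and $\bar x\bar y\bar z$ along a common leg and using that the angle at $x'$ is at least $\pi/2$---yields $d(x',[y',z'])\leq d(\bar x,[\bar y,\bar z])$. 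Your approach is self-contained and purely trigonometric; the paper's is shorter but leans on a picture. One caution about your closing ``alternative'': the $\mathrm{CAT}(-k^2)$ inequality compares $xyz$ to the model triangle with the same three side lengths, not to $\bar x\bar y\bar z$ (whose hypotenuse $\bar c$ is shorter than $d(y,z)$), so transporting $\bar w$ to $[y,z]$ and invoking $\mathrm{CAT}$ is not a one-step argument; you still need a second comparison inside $H^m_{k^2}$, which is precisely what the paper's Poincar\'e-disk step supplies.
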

\begin{figure}[h]
	\centering
	\includegraphics[scale=0.6]{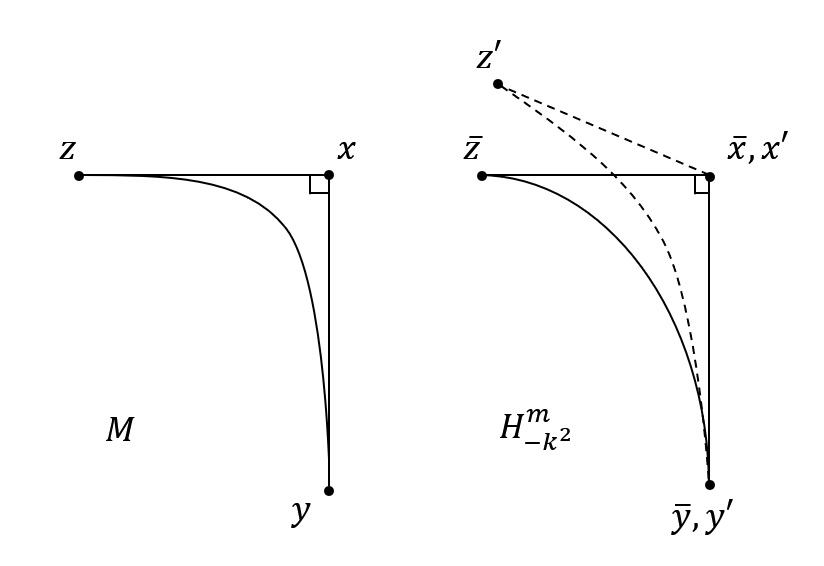}
	\caption{Comparing the triangles $ xzp$, $ x'z'p'$ and $ \bar{x}\bar{z}\bar{p}$}
	\label{comparerighttriangles} 
\end{figure}
\begin{proof} Consider another comparison triangle $ x'y'z'$ in $H^m_{k^2}$, where $d(x,y)=d(x',y')$, $d(x,z)=d(x',z')$ and $d(y,z)=d(y', z')$. Using Proposition 1.7 of \cite[Chap. II.1]{bridson2013metric}, , 
	\begin{equation}
	\label{eqn:comparison1}
	d(x,[y,z])\leq d(x',[y', z']).
	\end{equation}
	
	By Toponogov's theorem (see  for instance \cite[Chap. 2]{cheeger}), the angle of $x'y'z'$ at $x'$ is greater than the angle of $xyz$ at $x$, so it is larger than $\pi/2$. As a consequence, in the constant curvature space $H^m_{k^2}$, the geodesic triangles $x'y'z'$ and $\bar x \bar y \bar z$ are such that $d(x',y')=d(\bar{x},\bar{y})$,  $d(x',z')=d(\bar{x},\bar{z})$, and the angle of $x'y'z'$ at $x'$ is greater than the angle of $ \bar{x}\bar{y}\bar{z}$ at $\bar{x}$. Moving these comparison triangles by isometries of $H^m_{k^2}$, we can arrange that  $x'y'z'$ and $ \bar{x}\bar{y}\bar{z}$ are contained in the same 2-dimensional space $H^2_{k^2}\subset H^m_{k^2}$, modeled as the Poincar\'e disk. In addition, we can arrange that $x'=\bar x$, $y'=\bar y$, and the two triangles are on the same same side of $[x',y']=[\bar x, \bar y]$, as illustrated in Figure \ref{comparerighttriangles}. Since the legs $[x',z']$ and $[\bar{x},\bar{z}]$ are also of the same length, a simple geometric argument in the Poincar\'e disk shows that
\begin{equation}
	\label{eqn:comparison2}
 d(x',[y',z'])\leq d(\bar{x},[\bar{y},\bar{z}]).
\end{equation}
	
The combination of (\ref{eqn:comparison1}) and (\ref{eqn:comparison2}) completes the proof. 
\end{proof}

We are now ready to prove Theorem~\ref{blockedviewtheorem}. 

\begin{proof} [Proof of the Blocked View Theorem~\ref{blockedviewtheorem}]
Remember that we are trying to show that, for  $r_0 =\frac{1}{k}\log(\sqrt{2}+1)$, the blocked view set 
$$
C_p(x,r)=\big\{q\in M; [p,q]\cap B(x,r)\neq\emptyset\big\}
$$
contains, for the tangent vector $v_p$ of the geodesic $[p,x]$ at $x$, the half-space
$$
H(x,v_p)=\big\{q\in M;~\langle v_p,v_q\rangle\leq0 \text{ for the tangent vector } v_q \text{ of }[q,x]\text{ at }x\big\}.
$$

With this goal in mind, consider a point $q\in H(x,v_p)$. Since $p$ is in the complement of the half-space $H(x,v_p)$, there exists a point $z$ in the intersection of the geodesic $[p,q]$ and of the boundary $\partial H(x,v_p)$. By construction, the triangle $xpz$ has a right angle at the vertex $x$. 

The combination of Lemmas~\ref{theradius} and \ref{comparisonlemma} then shows that $x$ is at distance at most $r_0$ from the geodesic $[p,z]$, and therefore from the geodesic $[p,q]$. As a consequence, the view from $p$ to $q$ is blocked by the ball $B(x,r_0)$, and $q$ belongs to the blocked view set $C_p(x,r)$. 

This concludes the proof of the Blocked View Theorem~\ref{blockedviewtheorem}. 
\end{proof}

\section{The fair cut of a pie}
\label{cha:thefaircutofapie}

This section is now devoted to an apparently unrelated problem. The connection with the congestion problem will be explained in \S \ref{cha:themainestimate}. 

The issue is a fair-division scheme for a pie. Suppose that Alice and Bob want to split a cake, and that each of them wants to optimize the size of their share. Alice decides a point through which the knife should cut, and Bob decides in which direction to apply the cut. Alice knows that, wherever she picks a point, Bob will choose the cut through this point that will maximize his share, and consequently minimize Alice's share. So Alice's goal is to find a point where any cut will guarantee her an optimum share of the cake. We call such a point a ``fair-cut center''. 

In our case, the cake is replaced by a convex Riemannian manifold $M$ of negative curvature, and the knife cut at the point $x\in M$ occurs along a geodesic hyperplane $\partial H(x,v)$. 

\subsection{Definitions and the Fair-Cut Theorem}

Let $M$ be a compact convex Riemannian manifold.
For a point $x\in M$ and a unit vector $v\in T_x^1M$, the {half-space}
$$
H(x,v)=\big\{q\in M; \langle v_q,v\rangle\leq0 \text{ for  the tangent vector } v_q \text{ of }[q,x]\text{ at }x\big\}
$$
is bounded by the \emph{geodesic hyperplane}
$$
\partial H(x,v)=\big\{q\in M; \langle v_q,v\rangle= 0 \text{ for  the tangent vector } v_q \text{ of }[q,x]\text{ at }x\big\}
$$

\begin{definition}[A fair cut of the pie]	
	Let $M$ be a compact, convex $m$-dimensional Riemannian manifold with non-positive sectional curvature. The \emph{fair-cut index} of $M$ is the number
	\begin{equation}
	\label{faircutindexsimple}
	\Phi(M)=\max_{x\in M}{\min_{v\in T_x^1M}{\frac{\Vol{H(x,v)}}{\Vol{M}}}}.
	\end{equation}
\end{definition}

In other words, if we consider the function $f_x\colon T_x^1M\longrightarrow[0,1]$ defined by 
$$
f_x(v)=\frac{\Vol{H(x,v)}}{\Vol{M}},
$$
which measures the percentage of the  pie corresponding to $H(x,v)$, and the function $\varphi_M \colon M\longrightarrow[0,1]$  defined as the minimum
$$
\varphi_M(x)=\min_{v\in T_x^1M}{f_x(v)},
$$
then the fair cut index is
\begin{equation}
\label{faircutindex}
\Phi(M)=\max_{x\in M}\varphi_M(x)=\max_{x\in M}{\min_{v\in T_x^1M}{f_x(v)}}
=\max_{x\in M}  \min_{v\in T_x^1M} \frac{\Vol{H(x,v)}}{\Vol M}.
\end{equation}

We will obtain the following estimate.
\begin{thm}[Fair-cut Theorem]
	\label{cutpiethm}
	Let $M$ be a compact, convex $m$-dimensional Riemannian manifold with constant non-positive sectional curvature.
	
	Then,
	\begin{equation*}
	\frac{1}{m+1}\leq\Phi(M)\leq\frac{1}{2}.
	\end{equation*}
\end{thm}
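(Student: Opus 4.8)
The plan is to derive the lower bound from the classical \emph{centerpoint theorem} of combinatorial geometry, after transporting the problem from $M$ to a model space; the upper bound is elementary. For the upper bound, fix any $x\in M$ and $v\in T_x^1M$. The half-spaces $H(x,v)$ and $H(x,-v)$ together cover $M$ and overlap only along the geodesic hyperplane $\partial H(x,v)$, which is a codimension-one submanifold and hence has zero volume; thus $f_x(v)+f_x(-v)=1$. In particular $\varphi_M(x)=\min_w f_x(w)\leq\min\{f_x(v),f_x(-v)\}\leq\frac12$ for every $x$, so $\Phi(M)\leq\frac12$.

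For the lower bound, note that a compact convex manifold of constant non-positive sectional curvature is isometric to a compact geodesically convex domain either in $\mathbb R^m$ or in hyperbolic space $H^m_{k^2}$; in the latter case, pass to the Beltrami--Klein model, in which geodesics are straight chords of a Euclidean ball $B^m\subset\mathbb R^m$ and totally geodesic hyperplanes are intersections of $B^m$ with affine hyperplanes. In either model $M$ becomes a compact convex subset of $\mathbb R^m$, and for $x\in M$ and $v\in T_x^1M$ the half-space $H(x,v)$ is $M$ intersected with a closed affine half-space whose bounding hyperplane passes through $x$. Let $\mu$ be the probability measure on $\mathbb R^m$ obtained by normalizing the Riemannian volume of $M$, so that $\mathrm{supp}\,\mu=M$ and $f_x(v)=\mu\bigl(H(x,v)\bigr)$. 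It therefore suffices to produce a point $x_0\in M$ such that $\mu(H)\geq\frac1{m+1}$ for every closed affine half-space $H$ containing $x_0$.

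This is the centerpoint theorem, which I would prove by Helly's theorem. Let $\mathcal F$ be the family of all closed affine half-spaces $H$ with $\mu(H)>\frac m{m+1}$. Given $H_1,\dots,H_{m+1}\in\mathcal F$, each complement has $\mu(\mathbb R^m\setminus H_i)<\frac1{m+1}$, so $\mu\bigl(\bigcup_i(\mathbb R^m\setminus H_i)\bigr)<1$; hence $\bigcap_i H_i$ has positive $\mu$-measure and in particular meets $\mathrm{supp}\,\mu=M$. Choosing a closed ball $\overline B\supset M$, the sets $\{H\cap\overline B:H\in\mathcal F\}$ are compact convex subsets of $\mathbb R^m$, every $m+1$ of which have a common point (lying in $M$); so Helly's theorem gives a common point for every finite subfamily, and compactness then yields $x_0\in\bigcap_{H\in\mathcal F}(H\cap\overline B)$. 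It remains to check the centerpoint property at $x_0$, which is the only delicate point since $x_0$ may lie on the bounding hyperplane of the half-space in question. Write such a half-space as $H=\{f\leq0\}$ for an affine function $f$, and suppose $x_0\in H$ but $\mu(H)<\frac1{m+1}$. Then $\mu(\{f>0\})>\frac m{m+1}$, and since $\{f\geq\epsilon\}\uparrow\{f>0\}$ as $\epsilon\downarrow0$, continuity from below gives $\mu(\{f\geq\epsilon\})>\frac m{m+1}$ for all small $\epsilon>0$; thus $\{f\geq\epsilon\}\in\mathcal F$, whence $f(x_0)\geq\epsilon>0$, contradicting $f(x_0)\leq0$. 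So $\mu(H)\geq\frac1{m+1}$ for every closed half-space $H\ni x_0$; in particular $x_0$ lies in no $\mu$-null closed half-space, so by strict separation of a point from a compact convex set $x_0\in M$. Since each $H(x_0,v)$ is a closed affine half-space containing $x_0$, we get $\varphi_M(x_0)=\min_v f_{x_0}(v)\geq\frac1{m+1}$, hence $\Phi(M)\geq\frac1{m+1}$.

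The main obstacle is not a computation but the setup: one must verify carefully that in the Beltrami--Klein model the \emph{geodesic} convexity of $M$ and the \emph{geodesic} half-spaces $H(x,v)$ translate exactly into Euclidean convexity and affine half-spaces, so that Helly's theorem and the centerpoint argument apply verbatim, and one must be careful with the measure-theoretic bookkeeping at boundary hyperplanes (the $\epsilon$-shrinking step above). It is also worth recording, so that the outer $\max$ and inner $\min$ in the definition of $\Phi(M)$ are genuinely attained, that $(x,v)\mapsto f_x(v)$ is continuous; this follows from dominated convergence once one knows that the hyperplanes $\partial H(x,v)$ carry zero volume.
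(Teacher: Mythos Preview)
Your proof is correct and takes a genuinely different route from the paper. The paper works intrinsically in $M$: it fixes a fair-cut center $x_0$ (a maximizer of $\varphi_M$) and shows, by a perturbation argument along a geodesic through $x_0$, that the half-spaces $H(x_0,v)$ with $v$ in the minimizing set $V_{x_0}$ must already cover $M$; equivalently, $0\in T_{x_0}M$ lies in the convex hull of $V_{x_0}$. Carath\'eodory's theorem then extracts at most $m+1$ such $v_i$ whose half-spaces still cover $M$, and summing volumes gives $\Vol M\leq(m+1)\Phi(M)\Vol M$. You instead linearize via the Beltrami--Klein model, so that geodesic half-spaces become affine half-spaces, and the question reduces to the classical centerpoint theorem for the pushed-forward probability measure, which you prove cleanly via Helly's theorem.

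Your argument is shorter and makes transparent why the bound $\frac{1}{m+1}$ is exactly the centerpoint constant; it also sidesteps the Lipschitz-continuity lemma and the somewhat delicate perturbation step. The paper's approach, on the other hand, isolates the single place where constant curvature enters---the lemma that a half-space can be translated to a strictly smaller one by moving its basepoint inward along the normal---and the authors explicitly remark that this lemma, and hence the theorem, plausibly extends to variable (perhaps pinched) negative curvature. Your reduction relies essentially on the existence of a projective model in which geodesics are straight lines, which is special to constant curvature and has no obvious analogue otherwise. So the two arguments trade directness for potential generality: yours is the cleaner proof of the theorem as stated, while the paper's is better positioned for the extension they have in mind.
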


The upper bound is an immediate consequence of the observation that
\begin{equation*}
\Vol{H(x,v)}+\Vol{H(x,-v)}=\Vol{M}.
\end{equation*} 

The lower bound $\frac{1}{m+1}$ will require more elaborate arguments, described in the next sections.

Note that there exists manifolds $M$ for which the upper bound $\Phi(M) = \frac12$ is achieved. This will happen when $M$ is radially symmetric about a point $x_0$, in the sense that for every $p\in M$ there is a point $q\in M$ such that $x_0$ is the midpoint of the geodesic arc $[p, q]$. Indeed, in this case, $\Vol{H(x_0,v)} =\Vol{H(x_0,-v)}= \frac12 \Vol{M}$ for every $v\in T^1_{x_0}M$. 

We begin with a couple of lemmas, in the next two sections

\subsection{Lipschitz continuity for the volume function}
We will need an estimate on the local variation of the volume function $ \Vol{H(\cdot,\cdot)}:T^1M\longrightarrow\mathbb{R}_+ $. 

\begin{lem}[Lipschitz bound for the volume function]
	\label{lipschitzcontinuity}
	Let $M$ be a compact, convex $m$-dimensional Riemannian manifold with non-positive sectional curvature bounded in an interval $[-k_1^2, 0]$ with $k_1\geq 0$. Suppose that $t\mapsto  (x(t),v(t))$ is a smooth curve in the unit tangent bundle of $M$. Then 
	\begin{equation}
	\Big|\frac{d}{dt}\Vol{H(x(t),v(t))}\Big|\leq C(k_1, D) \Big| \frac d{dt} \big(x(t), v(t) \big) \Big|
	\end{equation}
where $C(k_1, D)$ is a constant depending only on the lower curvature bound $-k_1^2$ and on an upper bound $D$ for the diameter of $M$.
\end{lem}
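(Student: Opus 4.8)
The plan is to express the volume of the half-space $H(x,v)$ as an integral over the unit tangent sphere $T^1_xM$ in geodesic polar coordinates centered at $x$, so that the derivative with respect to $t$ can be computed explicitly and bounded term by term. Precisely, for a fixed point $x$, writing $\rho(x,u)$ for the distance from $x$ to the boundary $\partial M$ in the direction $u\in T^1_xM$ and $J(x,u,s)$ for the Jacobian of the exponential map (the area density of the geodesic sphere of radius $s$), we have
\begin{equation*}
\Vol H(x,v)=\int_{\{u\in T^1_xM:\,\langle u,v\rangle\le 0\}}\int_0^{\rho(x,u)} J(x,u,s)\,ds\,du.
\end{equation*}
Thus $\Vol H(x,v)$ depends on $(x,v)$ through three ingredients: the domain of integration (the hemisphere $\langle u,v\rangle\le0$), the inner radius $\rho(x,u)$, and the integrand $J(x,u,s)$, and I would differentiate along $t\mapsto(x(t),v(t))$ by controlling the contribution of each.

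The first and cleanest contribution is the motion of the hemisphere of integration. As $v=v(t)$ varies, the boundary great sphere $\{\langle u,v\rangle=0\}$ sweeps across $T^1_xM$ at a rate proportional to $|\dot v|$ (after projecting off the component of $\dot v$ along the trivial variation); the swept region has $(m-1)$-dimensional measure $O(|\dot v|)$, and each swept direction contributes an inner integral $\int_0^{\rho}J\,ds$ which is at most $\Vol M\le \Vol(B_{H^m_{k_1^2}}(D))$, a quantity bounded in terms of $k_1$ and $D$ only. The second contribution comes from the $x$-dependence: moving $x$ at speed $|\dot x|$ changes $\rho(x,u)$, changes $J(x,u,s)$, and also rotates the frame $T^1_xM$. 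Here the key comparison-geometry inputs are that, under a lower curvature bound $-k_1^2$, the Jacobian satisfies $J(x,u,s)\le \big(\frac{\sinh(k_1 s)}{k_1 s}\big)^{m-1}s^{m-1}$, hence $J\le (\sinh(k_1 D)/k_1)^{m-1}$, and its logarithmic derivatives in $x$ are bounded by a constant depending on $k_1$ and $D$ (this is where the two-sided curvature control, together with convexity guaranteeing no focal points inside $M$, is used). The boundary term from differentiating the limit $\rho(x,u)$ contributes $\big|\frac{d}{dt}\rho(x(t),u)\big|\cdot J(x,u,\rho)$; since $\partial M$ is smooth and $M$ compact, $\rho$ is Lipschitz in $x$, and this term is again $O(|\dot x|)$ with constant depending on $k_1,D$ and the geometry of $\partial M$ — which, for the application to manifolds of constant curvature, can be absorbed since one only needs \emph{some} finite constant.

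Assembling these, one gets $\big|\frac{d}{dt}\Vol H(x(t),v(t))\big|\le C_1|\dot x|+C_2|\dot v|\le C(k_1,D)\,|\frac{d}{dt}(x(t),v(t))|$ with $C(k_1,D)=\max(C_1,C_2)\cdot\sqrt2$ (or simply $C_1+C_2$), using that $|\frac{d}{dt}(x(t),v(t))|^2=|\dot x|^2+|\dot v|^2$ in the Sasaki-type metric on $T^1M$. I expect the main obstacle to be the honest bookkeeping in the $x$-variation: one must differentiate the exponential-map Jacobian $J(x,u,s)$ in the \emph{basepoint} $x$ while simultaneously parallel-transporting the sphere of directions, and show the resulting Jacobi-field estimates are uniform. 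This is standard Riemannian comparison but notationally heavy; the cleanest route is probably to fix a background reference — e.g. work in Fermi/exponential coordinates along the curve $x(t)$ — and invoke the smooth dependence of solutions of the Jacobi equation on parameters, with explicit $\sinh(k_1 s)$ bounds to make the constant depend only on $k_1$ and $D$. The moving-domain term and the boundary term are comparatively routine once the diameter bound caps the size of the inner integrals.
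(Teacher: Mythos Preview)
Your approach differs from the paper's and, as written, has a real gap in the boundary term.

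The paper does not use polar coordinates centered at $x$. Instead it applies the standard first–variation formula for the volume of a moving domain (citing Flanders):
\[
\frac{d}{dt}\Vol H\big(x(t),v(t)\big)=\int_{\partial H(x(t),v(t))}\langle N(y),J(y)\rangle\,d\mu,
\]
where the integral is over the geodesic \emph{hyperplane} $\partial H$, $N$ is its unit normal, and $J(y)=\frac{d}{dt}y(t)$ is the normal velocity of the moving hyperplane at $y$. The portion of the boundary of $H(x,v)$ lying on $\partial M$ does not move, so it contributes nothing; the fixed boundary $\partial M$ never enters the estimate. Then $|J(y)|$ is bounded by a Jacobi-field comparison along the geodesic $[x,y]$, giving $|J(y)|\le\big(\cosh_{k_1}(D)+\sinh_{k_1}(D)\big)\,|\frac{d}{dt}(x,v)|$, and $\Vol\big(\partial H\big)$ is bounded by the volume of a hyperbolic $(m-1)$–disk of radius $D$. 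Both bounds depend only on $k_1$ and $D$, which is exactly the content of the lemma.

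Your polar decomposition forces you to differentiate the cut-off radius $\rho(x,u)$ at $\partial M$, and your control of that term is where the argument breaks. The assertion ``since $\partial M$ is smooth and $M$ compact, $\rho$ is Lipschitz in $x$'' is false uniformly: when the geodesic ray from $x$ in direction $u$ meets $\partial M$ nearly tangentially, $\partial_t\rho$ blows up. (Already for the flat unit disk with $x=(a,0)$ and $u=(0,1)$ one has $\rho=\sqrt{1-a^2}$ and $\partial_a\rho=-a/\sqrt{1-a^2}\to\infty$ as $a\to1$.) You then concede that the resulting constant depends on ``the geometry of $\partial M$'', but that contradicts the statement you are asked to prove, which promises a constant $C(k_1,D)$ depending only on the curvature bound and the diameter. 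The integrated quantity $\int_{S_-}J(x,u,\rho)\,\partial_t\rho\,du$ is in fact controlled (it must be, since it equals the flux through $\partial H$ plus the remaining terms), but establishing this directly requires an argument you have not supplied; bounding the factors separately does not work.

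In short: switching to the flux formula over the moving hyperplane $\partial H$, as the paper does, sidesteps $\partial M$ entirely and gives the uniform constant in two lines. Your polar approach can presumably be repaired, but not by the term-by-term bounds you propose.
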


\begin{proof}
	The proof uses the  property that
	\begin{equation*}
	\frac{d}{dt}\Vol{H \big( x(t),v(t) \big)} =\int_{\partial H(x(t),v(t))}\langle N(y(t)),J(y(t))\rangle ~d\mu(t),
	\end{equation*}
	where $d\mu(t)$ is the volume form on the hyperplane, $y(t)$ is any point on the hyperplane, $N(y(t))$ is the normal vector of $\partial H(x(t),v(t))$ at $y(t)$, and $J(y(t))=\frac{d}{dt}y(t)$. A proof can for instance be found in \cite[Eqn. (7.2)]{flanders}.
	
	Also, standard comparison arguments with Jacobi fields gives
	\begin{equation*}
	\big|\langle N(y(t)),J(y(t))\rangle\big|\leq\big|J(y(t))\big|\leq\big|\cosh_{k_1}(D)+\sinh_{k_1}(D)\big|,
	\end{equation*}
where the functions $\cosh_{k_1}$ and $\sinh_{k_1}$ are defined as
\begin{align*}
 \cosh_{k_1}(x)& =   \cosh(k_1 x) &
 \sinh_{k_1}(x) &=  \frac1{k_1} \sinh(k_1 x).
\end{align*}
Finally, the volume $\Vol{\partial H(x(t),v(t))}$ is bounded by a universal constant times $\sinh_{k_1}(D)^{n-1}$. The required property then follows from these estimates. 
\end{proof}

In particular, the function $\varphi_M$ is continuous. By compactness of the unit tangent bundle $T^1M$, it attains its maximum at a point $x_0 \in M$ such that
\begin{align*}
\varphi_M(x_0) &= \Phi(M)=\max_{x\in M}{\varphi_M(x)} \\
& = \max_{x\in M} \min_{v\in T_{x_0}^1M}{f_{x_0}(v)} 
= \max_{x\in M} \min_{v\in T_{x_0}^1M}{\frac{\Vol{H(x_0,v)}}{\Vol{M}}}.
\end{align*}

\begin{definition}
	Let $M$ be a compact, convex $m$-dimensional Riemannian manifold with non-positive sectional curvature. A point $x_0\in M$ such that $\phi_M(x_0)=\Phi(M)$ is a  \emph{fair-cut center} for $M$.
\end{definition}

\subsection{Moving half-spaces to their interiors}

\begin{lem}
	\label{inneranglelemma}
	Let $M$ be an $m$-dimensional Riemannian manifold of negative sectional curvature, then the sum of the angles of a geodesic triangle in $M$ is less than $\pi$.
\end{lem}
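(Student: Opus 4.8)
The plan is to realise the geodesic triangle as the boundary of a ruled surface and to apply the Gauss--Bonnet formula to that surface.

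Let $xyz$ be a nondegenerate geodesic triangle in $M$, with interior angles $\alpha$, $\beta$, $\gamma$ at the vertices $x$, $y$, $z$. First I would form the ruled surface
$$
D=\bigcup_{w\in[y,z]}[x,w],
$$
the union of the geodesic arcs joining $x$ to the points of the opposite side. The map sending $(w,t)\in[y,z]\times[0,1]$ to the point dividing $[x,w]$ in the ratio $t:(1-t)$ parametrises $D$ by a triangular domain; in nonpositive curvature there are no conjugate points, so this map is an immersion away from $x$, and two distinct rulings can meet only at $x$ by uniqueness of geodesics. Thus $D$ is a piecewise smooth (embedded, or at worst immersed) topological disk whose boundary consists of the three sides $[x,y]$, $[y,z]$, $[z,x]$, with corners of interior angle $\alpha$, $\beta$, $\gamma$ at $x$, $y$, $z$; here we use that angles are measured with the Riemannian metric, which on $D$ is the one induced from $M$. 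Each ruling $[x,w]$, being a geodesic of $M$ contained in $D$, is a geodesic of $D$ for the induced metric (the tangential projection of its vanishing $M$-acceleration is $0$), and the same applies to the far edge $[y,z]$, so $D$ is a genuine geodesic triangle in its own metric.

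Next I would compare the Gauss curvature $K_D$ of $D$ with the ambient curvature. At each point choose an orthonormal frame $e_1,e_2$ of $TD$ with $e_1$ tangent to the ruling through that point. Since the rulings are $M$-geodesics, the second fundamental form $\mathrm{II}$ of $D\subset M$ satisfies $\mathrm{II}(e_1,e_1)=(\nabla^M_{e_1}e_1)^{\perp}=0$, so the Gauss equation gives
$$
K_D=\sigma_M(e_1\wedge e_2)+\langle \mathrm{II}(e_1,e_1),\mathrm{II}(e_2,e_2)\rangle-|\mathrm{II}(e_1,e_2)|^2=\sigma_M(e_1\wedge e_2)-|\mathrm{II}(e_1,e_2)|^2\leq\sigma_M(e_1\wedge e_2)<0,
$$
where $\sigma_M$ denotes the ambient sectional curvature. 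Hence $K_D<0$ everywhere on $D$, and since a nondegenerate triangle bounds a region of positive area, $\int_D K_D\,dA<0$.

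Finally, applying Gauss--Bonnet to the geodesic triangle $D$,
$$
\alpha+\beta+\gamma=\pi+\int_D K_D\,dA<\pi,
$$
which is the assertion. I expect the only genuinely delicate step to be the first one --- verifying that $D$ really is a piecewise smooth disk with the stated boundary and corner structure; this is where the absence of conjugate points and uniqueness of geodesics in nonpositive curvature enter, and where one restricts to a convex region (or passes to the convex hull of the triangle) if $M$ is not globally convex. Once that is in place, the Gauss-equation comparison and Gauss--Bonnet are routine.
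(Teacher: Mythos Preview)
Your argument is correct and is a classical, self-contained route to the result. The paper, by contrast, gives no argument at all: it simply cites the comparison theory in \cite[Prop.~II.1.7]{bridson2013metric}, where one compares the given triangle to a Euclidean comparison triangle and reads off that each angle in $M$ is no larger than the corresponding Euclidean angle.

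So the two approaches are genuinely different. The paper's route is the CAT$(0)$/Toponogov comparison: fast if one already has that machinery, and it generalises immediately to metric spaces of curvature $\leq 0$. Your route builds a ruled $2$-disk $D$ spanning the triangle, uses the Gauss equation together with $\mathrm{II}(e_1,e_1)=0$ along the rulings to force $K_D\leq\sigma_M<0$, and then finishes with Gauss--Bonnet. This is more hands-on and yields the quantitative identity $\alpha+\beta+\gamma=\pi+\int_D K_D\,dA$, which the bare comparison citation does not. The trade-off, as you note, is the regularity of $D$ near the cone point $x$ and the verification that $D$ is an embedded (or immersed) piecewise-smooth disk; these are standard in nonpositive curvature but do require the convexity/no-conjugate-points hypotheses you invoke. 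Your observation that the boundary edges, being $M$-geodesics lying in $D$, are automatically $D$-geodesics is exactly what makes the geodesic-curvature term in Gauss--Bonnet vanish.
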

\begin{proof}
	This is an easy application of comparison theorems; see for instance \cite[Prop II.1.7] {bridson2013metric}. 
\end{proof}

The next lemma requires the curvature of our manifold $M$ to be constant. 

\begin{lem}
	\label{insidethehalfspace}
	In an $m$-dimensional Riemannian manifold $M$ of constant negative sectional curvature, let $H(x_0,v_0)\subset M$ be the half-space defined by a point $x_0\in M$ and a unit vector $v_0\in T_{x_0}^1M$. For any $x$ in the interior of the half-space $H(x_0,v_0)$, there exists a vector  $v\in T_{x}^1M$ such that $H(x,v)$ is contained in the interior of $ H(x_0,v_0)$. In particular, $\Vol H(x,v)< \Vol H(x_0,v_0)$.
	
	Similarly, if $x$ is in $M\setminus H(x_0,v_0)$, there exists  $v\in T_{x}^1M$ such that the interior of $H(x,v)$ contains $ H(x_0,v_0)$, and  $\Vol H(x,v)> \Vol H(x_0,v_0)$.
\end{lem}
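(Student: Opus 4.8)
The plan is to construct the vector $v$ explicitly so that the hyperplane $\partial H(x,v)$ is, loosely speaking, "parallel" to $\partial H(x_0,v_0)$ in the sense of hyperbolic geometry. Working in the space form of constant curvature $-k^2$, fix $x$ in the interior of $H(x_0,v_0)$ and let $w$ be the foot of the perpendicular from $x$ to the hyperplane $\partial H(x_0,v_0)$ (which exists and is unique by convexity). Let $v\in T_x^1M$ be the unit vector at $x$ pointing along the geodesic $[x,w]$; equivalently, $v$ is the parallel transport of $-v_0$ along $[x_0$-side$]$, but the cleanest description is: $v$ points toward $\partial H(x_0,v_0)$ along the common perpendicular. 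I claim $H(x,v)\subset \operatorname{int} H(x_0,v_0)$.

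To verify the inclusion, take any $q\in H(x,v)$; I want to show $q\in\operatorname{int}H(x_0,v_0)$. Suppose not, so $q\notin H(x_0,v_0)$, and let $z$ be the intersection point of the geodesic $[x,q]$ with the hyperplane $\partial H(x_0,v_0)$. Consider the geodesic triangle with vertices $x$, $w$, $z$: it has a right angle at $w$ (since $[x,w]\perp\partial H(x_0,v_0)$ and $[w,z]$ lies in that hyperplane), so by Lemma~\ref{inneranglelemma} the angle at $x$ between $[x,w]$ (direction $v$) and $[x,z]$ (direction of $[x,q]$) is strictly less than $\pi/2$. Hence $\langle v_q', v\rangle > 0$ where $v_q'$ is the tangent to $[q,x]$ reversed — i.e. the tangent $v_q$ to $[q,x]$ at $x$ satisfies $\langle v_q,v\rangle<0$ is the membership condition, and a short sign check shows $q\notin H(x,v)$, a contradiction. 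The strictness of the angle inequality (Lemma~\ref{inneranglelemma} gives a strict bound since curvature is negative) is what upgrades the conclusion from $H(x_0,v_0)$ to its interior, and the same argument shows $\partial H(x,v)$ does not meet $\partial H(x_0,v_0)$, so $\operatorname{vol}H(x,v)<\operatorname{vol}H(x_0,v_0)$ strictly once one checks the two hyperplanes are distinct.

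The second statement, for $x\in M\setminus H(x_0,v_0)$, follows by applying the first statement to the complementary half-space: $x$ lies in the interior of $H(x_0,-v_0)$ (after disposing of the boundary case $x\in\partial H(x_0,v_0)$, where one can take any $v$ making $\partial H(x,v)=\partial H(x_0,v_0)$ if such exists, or argue by a limiting/perturbation argument), so there is $v'\in T_x^1M$ with $H(x,v')\subset\operatorname{int}H(x_0,-v_0)$, and then $v=-v'$ gives $H(x,v)\supset M\setminus\operatorname{int}H(x_0,-v_0)\supset H(x_0,v_0)$ with the strict volume inequality. The main obstacle I anticipate is handling the degenerate configurations cleanly — in particular making sure the foot of the perpendicular $w$ genuinely lies on $\partial H(x_0,v_0)$ (not merely on the totally geodesic hypersurface extending it, which could exit $M$) — and this is exactly where convexity of $M$ and constant curvature are used: constant curvature guarantees that the totally geodesic hypersurface through $x$ orthogonal to $[x,w]$ is itself a half-space hyperplane $\partial H(x,v)$, which is false for variable curvature and is the reason the lemma's hypothesis insists on constant curvature.
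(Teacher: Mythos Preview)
Your approach matches the paper's exactly: drop a perpendicular from $x$ to $\partial H(x_0,v_0)$, take $v$ along that perpendicular, and invoke the angle-sum bound of Lemma~\ref{inneranglelemma} in the resulting right triangle to prove the inclusion. However, you have the direction of $v$ reversed. Recall that $H(x,v)=\{q:\langle v_q,v\rangle\leq 0\}$ where $v_q$ is the tangent of $[q,x]$ at $x$; equivalently, $q\in H(x,v)$ iff the geodesic from $x$ to $q$ starts in a direction making angle $\leq\pi/2$ with $v$. With your $v$ pointing from $x$ \emph{toward} the foot $w$, the geodesic from $x$ in direction $v$ reaches $w\in\partial H(x_0,v_0)$ and continues out of $H(x_0,v_0)$, so $H(x,v)\not\subset H(x_0,v_0)$. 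Indeed your own triangle computation yields $\langle v_q,v\rangle<0$ for $q$ outside $H(x_0,v_0)$, which is exactly the membership condition $q\in H(x,v)$, not $q\notin H(x,v)$; your ``short sign check'' went the wrong way.

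The paper (using $z$ for the nearest boundary point) takes $v$ to be the tangent of $[z,x]$ at $x$, i.e.\ pointing \emph{away} from $\partial H(x_0,v_0)$ and deeper into the half-space, and then the right-triangle argument gives the correct contradiction. Replace your $v$ by $-v$ and your proof is essentially the paper's. Your flagged concern about whether the perpendicular foot actually lies in $M$ is legitimate and the paper glosses over it too; taking the closest point of $\partial H(x_0,v_0)\subset M$ (which exists by compactness) and using first variation handles this.
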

\begin{proof} Let $z $ be a point in the boundary $ \partial H(x_0,v_0)$ that is closest to $x$, and let $w\in T_z^1M$ be the vector tangent to the geodesic $[z,x]$ at $z$. Because the curvature is constant, $H(z,w) = H(x_0, v_0)$. If $v\in T_x^1M$ is tangent to $[z,x]$ at $x$, we conclude that $H(x,v)$  is contained in the interior of $H(z,w) = H(x_0, v_0)$. (Otherwise, one would see a triangle with two right angles, which is excluded by the negative curvature.)
	
The second part of the statement is proved in a similar way. 	
\end{proof}

\begin{rem}
 This is the only point where we need the curvature of $M$  to be constant. It is quite likely that a similar statement can be proved under a weaker hypothesis, for instance the classical curvature pinching property that the sectional curvature is in an interval $[-4a^2, -a^2]$ with $a>0$. 
\end{rem}

\subsection{Minimizing directions at a fair-cut center}
We now focus on a fair-cut center $x_0$ for the manifold $M$.

\begin{prop}
\label{propcovering}
Suppose that $M$ is a compact convex manifold with constant negative curvature, and let $x_0\in M$ be a fair-cut center. Let
\begin{align*}
V_{x_0}&=\big\{v\in T_{x_0}^1M;  f_{x_0}(v)=\phi_M(x_0)=\Phi(M)\}
\\
&=\big\{v\in T_{x_0}^1M;  \Vol{H(x_0,v)}  =\Phi(M) \Vol(M)\}
\end{align*}
be the set of minimizing directions at $x_0$. Then
$$
M=\bigcup_{v\in V_{x_0}}H(x_0,v).
$$	
\end{prop}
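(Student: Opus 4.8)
The plan is to argue by contradiction. Suppose the conclusion fails, so there is a point $x \in M$ that lies in the interior of $H(x_0,v)$ for every minimizing direction $v \in V_{x_0}$; equivalently, $x \notin \partial H(x_0,v)$ and $x \notin M \setminus H(x_0,v)$ for all $v \in V_{x_0}$. The goal is to produce a point $x_1$ with $\varphi_M(x_1) > \Phi(M)$, contradicting the maximality of $x_0$ as a fair-cut center. I would look for $x_1$ along the geodesic segment from $x_0$ toward $x$, at a small distance $t$ from $x_0$, and estimate $f_{x_1}(w)$ from below, uniformly over all $w \in T_{x_1}^1 M$.

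First I would set up the comparison at the minimizing directions. For each $v \in V_{x_0}$ we have $\Vol H(x_0,v) = \Phi(M)\Vol M$ and, since $x$ is in the \emph{interior} of $H(x_0,v)$, Lemma~\ref{insidethehalfspace} (applied with the roles of the points reversed, or directly to the half-spaces based at $x_0$) combined with Lemma~\ref{inneranglelemma} shows that moving the base point a bit in the direction of $x$ strictly enlarges the half-space that opens ``toward $x$''. Quantitatively, using the first-variation formula from the proof of Lemma~\ref{lipschitzcontinuity},
\begin{equation*}
\frac{d}{dt}\Big|_{t=0}\Vol H(x(t),v) = \int_{\partial H(x_0,v)} \langle N(y), J(y)\rangle \, d\mu(y),
\end{equation*}
where $x(t)$ is the geodesic from $x_0$ toward $x$; because $x$ lies strictly on the $H(x_0,v)$ side, the variation vector $J$ has a definite inward component, so this derivative is bounded below by a positive constant $c(v) > 0$, and by continuity (and compactness of $V_{x_0}$, which is closed) one gets a uniform lower bound $c > 0$ over all $v \in V_{x_0}$. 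Hence for small $t>0$, $f_{x(t)}(v) \geq \Phi(M) + \tfrac{c}{2\Vol M}\, t$ for all $v \in V_{x_0}$.

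Next I would handle the directions $w$ \emph{not} near $V_{x_0}$. By definition of $V_{x_0}$ as the full set of minimizers of the continuous function $f_{x_0}$ on the compact sphere $T_{x_0}^1 M$, there is $\delta > 0$ with $f_{x_0}(w) \geq \Phi(M) + \delta$ whenever $w$ lies outside an open neighborhood $U$ of $V_{x_0}$. The Lipschitz bound of Lemma~\ref{lipschitzcontinuity} gives $|f_{x(t)}(w) - f_{x_0}(w')| \leq C(k_1,D)\,(\text{dist in } T^1M)$ along any short path, so for $t$ small enough every $w \in T_{x(t)}^1 M$ whose basepoint-projection-adjusted direction is outside $U$ still satisfies $f_{x(t)}(w) \geq \Phi(M) + \delta/2$. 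For directions $w$ \emph{inside} $U$, one uses that they are close to some $v \in V_{x_0}$ and combines the two estimates: $f_{x(t)}(w) \geq f_{x(t)}(v) - C(k_1,D)\,\|w - v\| \geq \Phi(M) + \tfrac{c}{2\Vol M}t - C(k_1,D)\,\mathrm{diam}(U)$, which is $> \Phi(M)$ once $U$ is chosen small and then $t$ small. Taking $x_1 = x(t)$ for such a $t$ gives $\varphi_M(x_1) = \min_w f_{x_1}(w) > \Phi(M)$, the desired contradiction.

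The main obstacle is making the variation argument near $V_{x_0}$ genuinely uniform: one must control $f_{x(t)}(w)$ for $w$ ranging over a neighborhood of $V_{x_0}$, where the first-variation derivative is only known to be positive on $V_{x_0}$ itself, and simultaneously balance the gain $\sim ct$ in the ``good'' direction against the possible loss $\sim C\|w-v\|$ from moving the direction. This requires choosing the neighborhood $U$ and the parameter $t$ in the right order (shrink $U$ first, using the strict positivity and the Lipschitz constant, then shrink $t$), which I expect to be the technical heart of the argument; the curvature being constant enters precisely through Lemma~\ref{insidethehalfspace}, which guarantees the half-spaces based at nearby points genuinely nest.
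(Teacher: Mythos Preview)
Your overall strategy—move along the geodesic from $x_0$ toward the uncovered point and contradict maximality—is the right idea, but there are two concrete problems.

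First, your negation is reversed. If $M \neq \bigcup_{v\in V_{x_0}} H(x_0,v)$, the uncovered point $x$ satisfies $x \in M \setminus H(x_0,v)$ for every $v\in V_{x_0}$, not $x$ in the interior of each $H(x_0,v)$. With the paper's convention this means the initial tangent $u_0$ of $[x_0,x]$ satisfies $\langle u_0,v\rangle<0$ for all $v\in V_{x_0}$; correspondingly, it is the \emph{second} part of Lemma~\ref{insidethehalfspace} that applies, and only then does moving toward $x$ enlarge the relevant half-spaces. Second, your uniformity estimate for $w$ near $V_{x_0}$ does not close as written. Bounding $f_{x(t)}(w)\geq f_{x(t)}(v)-C\|w-v\|$ forces $t\gtrsim \mathrm{diam}(U)$, while the case $w\notin U$ forces $t\lesssim \delta(U)$; since shrinking $U$ also shrinks $\delta(U)$, the order ``shrink $U$, then shrink $t$'' is circular. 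The repair is to note that the first-variation derivative is strictly positive on a \emph{fixed} neighborhood $U$ of $V_{x_0}$ (because $\langle u_0,w\rangle<0$ persists by continuity), so $f_{x(t)}(w_t)\geq f_{x_0}(w)+ct\geq \Phi(M)+ct$ directly for $w\in U$, with no comparison to $v\in V_{x_0}$ needed.

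The paper avoids all of this quantitative balancing. Instead of proving $\varphi_M(x_t)>\Phi(M)$, it uses the maximality of $x_0$ to get $\varphi_M(x_t)\leq\Phi(M)$, takes a minimizer $v_t\in T^1_{x_t}M$, parallel-transports it to $v_0\in T^1_{x_0}M$, and invokes only the Lipschitz bound of Lemma~\ref{lipschitzcontinuity} to show $\Vol H(x_0,v_0)<\Phi(M)\Vol M+\alpha_0$, hence $\langle u_0,v_0\rangle<0$. This places $x_0$ in the interior of $H(x_t,v_t)$, and a single application of Lemma~\ref{insidethehalfspace} yields $w_0\in T^1_{x_0}M$ with $\Vol H(x_0,w_0)<\Vol H(x_t,v_t)\leq \Phi(M)\Vol M$, contradicting the definition of $\varphi_M(x_0)$. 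No first-variation formula and no two-regime estimate are required.
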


\begin{proof}
 Suppose, in search of a contradiction, that the half-spaces $H(x_0,v)$ with $v\in V_{x_0}$ do not cover all of $M$. We will then show that $x_0$ is not a local maximum of the function $\phi_M$, contradicting its definition. 
 
 If there exists a point $p\in M$ that is not in the union of the $H(x_0,v)$ with $v\in V_{x_0}$, the tangent of the geodesic arc $[x_0, p]$ at $x_0$ provides a unit tangent vector $u_0 \in T_{x_0}^1M$ such that $\langle u_0 , v \rangle<0$ for every $v\in V_{x_0}$.
 
 In particular, the set $\{ v \in T_{x_0}^1M; \langle u_0 , v \rangle<0 \}$ is an open neighborhood of the minimizing set $V_{x_0} =\{ v \in T_{x_0}^1M;  \Vol{H(x_0,v)}  = \Phi(M) \Vol(M) \}$. By compactness of $V_{x_0}$, there consequently exists an $\alpha_0>0$ such that 
 $$
\{ v \in T_{x_0}^1M; \Vol{H(x_0,v)}   < \Phi(M)  \Vol(M) + \alpha_0 \} \subset \{ v \in T_{x_0}^1M; \langle u_0 , v \rangle<0 \}
 $$
 
 In other words, for every $v_0 \in T_{x_0}^1M$, either 
\begin{equation}
\label{eqn:LargeVolume}
  \Vol{H(x_0,v_0)} \geq \Phi(M) \Vol M  + \alpha_0
\end{equation}
or 
\begin{equation}
\label{eqn:NegativeDotProduct}
 \langle u_0 , v_0 \rangle<0.
\end{equation}

Let $g \colon (-\epsilon, \epsilon) \to M$ be a small geodesic arc with $g(0)=x_0$ and $g'(0)=u_0$. For $t>0$, set $x_t = g(t)$. 

By definition of the function $\phi_M(x)= \inf_{v\in T_x^1M} (\Vol H(x,v))/(\Vol M)$ and since $x_0$ realizes the maximum of this function, there exists $v_t \in T_{x_t}^1M$ such that
$$
\Vol H(x_t, v_t) = \phi_M(x_t) \Vol M \leq  \phi_M(x_0) \Vol M = \Phi(M) \Vol M. 
$$
Let $v_0 \in T_{x_0}^1M$ be obtained by parallel translating $v_t$ along the geodesic $g$. 

The  Lipschitz continuity property of Lemma~\ref{lipschitzcontinuity} shows that, provided we chose $x_t$ sufficiently close to $x_0$ (depending only on the constant $\alpha_0>0$ arising in  (\ref{eqn:LargeVolume}) ), we have that 
\begin{align*}
\Vol H(x_0, v_0) &\leq \Vol H(x_t, v_t) + {\textstyle\frac12} \alpha_0
\\
&\leq   \Phi(M) \Vol M + {\textstyle\frac12} \alpha_0 
\end{align*}
by choice of $v_t$. As a consequence,  (\ref{eqn:LargeVolume}) cannot hold. 

Therefore, $v_0$ satisfies (\ref{eqn:NegativeDotProduct}). Since $v_t$ is obtained by parallel translating $v_0$ along the geodesic $g$, $\langle v_t, g'(t) \rangle = \langle v_0, u_0 \rangle <0$.  Lemma~\ref{insidethehalfspace}
then provides another vector $w_0 \in T_{x_0}^1M$ such that 
$$
\Vol H(x_0, w_0) < \Vol H(x_t, v_t) \leq  \phi_M(x_0) \Vol M . 
$$
However, the existence of $w_0$ would contradict the fact that $\phi_M(x_0)$ is defined as the infimum of $\Vol H(x_0, v) /  \Vol M$ over all $v\in T_{x_0}^1M$. 

This final contradiction concludes the proof of Proposition~\ref{propcovering}. 
 \end{proof}

 We now improve Proposition~\ref{propcovering}, by bounding the number of half-spaces $H(x_0,v)$, with $v\in V_{x_0}$, needed to cover the manifold $M$. 
 
 This is based on the following elementary observation. 
 
\begin{lem}
\label{UnionHalfspacesAndConvexHull}
In a compact convex manifold $M$ with  nonpositive curvature, let $V$ be a subset of  the unit tangent space $T_{x_0}^1M$. The following properties are equivalent:
\begin{enumerate}
 \item $ M$ is the union of the half-spaces $H(x_0, v)$ as $v$ ranges over all vectors of~$V$;
 \item for every $w\in T_{x_0}^1M$, there exists $v\in V$  with $\langle v,w \rangle \geq 0$;
 \item   in the vector space $T_xM$, the point $0$ is  in the convex hull $\mathrm{Conv}(V)$ of $V$. 
\end{enumerate}
\end{lem}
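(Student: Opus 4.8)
The plan is to prove the chain of equivalences (1) $\Rightarrow$ (2) $\Rightarrow$ (3) $\Rightarrow$ (1), translating the geometry of half-spaces into the linear algebra of $T_{x_0}M$.

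First I would unwind the definitions. For a vector $w\in T_{x_0}^1M$, let $p$ be the point obtained by running the geodesic from $x_0$ with initial velocity $w$ for a tiny time (or, in the boundary case, any point $p$ with $[x_0,p]$ having initial direction $w$); then the tangent vector $v_p$ of $[p,x_0]$ at $x_0$ is $-w$. By definition, $p\in H(x_0,v)$ iff $\langle v_p, v\rangle\leq 0$, i.e.\ iff $\langle w, v\rangle\geq 0$. This immediately gives (1) $\Leftrightarrow$ (2): property (1) says every point of $M$ lies in some $H(x_0,v)$; since every point $p\neq x_0$ determines a direction $w$ (and $x_0$ itself lies in every half-space), this is the same as saying that for every $w\in T_{x_0}^1M$ there is some $v\in V$ with $\langle v,w\rangle\geq 0$. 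One small point to handle carefully: a general point $p$ of $M$ at finite distance gives a genuine unit direction $w$, so there is no issue with the geodesic leaving $M$ — convexity guarantees $[x_0,p]$ stays inside $M$. So (1) $\Leftrightarrow$ (2) is essentially a restatement.

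Next, (2) $\Leftrightarrow$ (3) is pure finite-dimensional convex geometry in the inner product space $T_{x_0}M\cong\mathbb{R}^m$. Property (2), which I would first extend from unit vectors $w$ to all nonzero $w$ by homogeneity, says that $V$ is not contained in any open half-space $\{x : \langle x, w\rangle < 0\}$ through the origin, equivalently that no hyperplane through $0$ has all of $V$ strictly on one side. By the separating hyperplane theorem (applied to the compact convex set $\mathrm{Conv}(V)$ — here I would note $V$ may be assumed closed, or just take closures, since $0\in\mathrm{Conv}(V)$ iff $0\in\overline{\mathrm{Conv}(V)}$ when $V\subset S^{m-1}$ is bounded and we can pass to $\mathrm{Conv}(\bar V)$), $0\notin\mathrm{Conv}(V)$ would produce a vector $w$ with $\langle v,w\rangle < 0$ for all $v\in V$, contradicting (2); conversely if $0\in\mathrm{Conv}(V)$ then $0=\sum \lambda_i v_i$ with $\lambda_i>0$, $\sum\lambda_i=1$, so for any $w$ we get $0=\sum\lambda_i\langle v_i,w\rangle$, forcing $\langle v_i,w\rangle\geq 0$ for at least one $i$, which is (2).

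The main obstacle, such as it is, is bookkeeping the correspondence between points $p\in M$ and directions $w\in T_{x_0}^1M$ cleanly — in particular making sure the boundary case ($p$ could be on $\partial M$, or the geodesic $[x_0,p]$ could be long) does not cause trouble, and handling $p=x_0$ as a trivial case. There is also the minor subtlety that property (2) is stated for unit vectors while (3) is about the convex hull in the full vector space, requiring the homogeneity rescaling, and that for the separating hyperplane argument one wants $\mathrm{Conv}(V)$ closed, which is automatic if $V$ is closed (as it will be in the application, where $V=V_{x_0}$) or can be arranged by replacing $V$ with its closure without changing validity of (1) or (3). None of these is a serious difficulty; the lemma is genuinely elementary, and I would present it as three short implications with the separating hyperplane theorem doing the only real work.
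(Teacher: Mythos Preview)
Your proof is correct and follows the same approach as the paper, which merely sketches the argument: it notes that (1)~$\Leftrightarrow$~(2) follows by associating to each $x\in M$ the tangent vector of $[x_0,x]$ at $x_0$, and that (2)~$\Leftrightarrow$~(3) is an elementary property of convex sets in $\mathbb{R}^m$. You have simply filled in the details the paper omits, including the separating-hyperplane argument for (2)~$\Leftrightarrow$~(3) and the bookkeeping about directions versus points.
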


\begin{proof}
The equivalence of (1) and (2) is easily seen by considering, for every $x\in M$, the tangent vector $w$ of the geodesic $[x_0, x]$ at $x_0$. 

The equivalence of (2) and (3) is an elementary property of convex sets in $\mathbb{R}^m$. 
\end{proof}
 
\begin{prop}
\label{propcoveringwithbound}
 Let $M$ be a compact convex manifold with constant negative curvature, and let $x_0\in M$ be its fair-cut center. Then, there exists $n$ vectors $v_1$, $v_2$, \dots, $v_n \in V_{x_0}$ in the mininimizing set $V_{x_0} \subset T_{x_0}^1 M$, with $n\leq \dim M +1$, such that 
 $$
 M= \bigcup_{i=1}^n H(x_0, v_i). 
 $$
\end{prop}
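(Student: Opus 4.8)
The plan is to combine Proposition~\ref{propcovering} with Lemma~\ref{UnionHalfspacesAndConvexHull} and then apply Carath\'eodory's theorem in the tangent space $T_{x_0}M \cong \mathbb{R}^m$. By Proposition~\ref{propcovering}, the half-spaces $H(x_0,v)$ with $v \in V_{x_0}$ cover $M$, so by the equivalence (1)$\Leftrightarrow$(3) in Lemma~\ref{UnionHalfspacesAndConvexHull}, the origin $0$ lies in the convex hull $\mathrm{Conv}(V_{x_0})$ inside the $m$-dimensional vector space $T_{x_0}M$.

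Now I would invoke Carath\'eodory's theorem: any point in the convex hull of a subset of $\mathbb{R}^m$ is already in the convex hull of at most $m+1$ of its points. Applying this to $0 \in \mathrm{Conv}(V_{x_0})$, there exist vectors $v_1, \dots, v_n \in V_{x_0}$ with $n \leq m+1 = \dim M + 1$ such that $0 \in \mathrm{Conv}(\{v_1, \dots, v_n\})$. Then the reverse implication (3)$\Rightarrow$(1) of Lemma~\ref{UnionHalfspacesAndConvexHull}, applied to the finite set $V = \{v_1, \dots, v_n\}$, shows that $M = \bigcup_{i=1}^n H(x_0, v_i)$, which is exactly the claim.

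This proof is essentially immediate once the two ingredients are in place, so there is no real obstacle; the only thing to be careful about is that Lemma~\ref{UnionHalfspacesAndConvexHull} is stated for an arbitrary subset $V$ of $T_{x_0}^1M$, so it applies equally to the infinite set $V_{x_0}$ (to extract $0 \in \mathrm{Conv}(V_{x_0})$) and to the finite set $\{v_1, \dots, v_n\}$ (to conclude the covering), with no continuity or closedness hypotheses needed. One could alternatively write it out without explicitly naming Carath\'eodory — starting from a minimal covering subfamily and arguing that if $n > m+1$ one of the $v_i$ can be dropped — but citing Carath\'eodory's theorem directly is cleaner.

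\begin{proof}
By Proposition~\ref{propcovering}, the manifold $M$ is the union of the half-spaces $H(x_0, v)$ with $v \in V_{x_0}$. By the equivalence of properties (1) and (3) in Lemma~\ref{UnionHalfspacesAndConvexHull}, this means that the origin $0$ belongs to the convex hull $\mathrm{Conv}(V_{x_0})$ of $V_{x_0}$ in the $m$-dimensional vector space $T_{x_0}M$, where $m = \dim M$.

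By Carath\'eodory's theorem, every point of the convex hull of a subset of an $m$-dimensional vector space already lies in the convex hull of at most $m+1$ of its points. Applying this to $0 \in \mathrm{Conv}(V_{x_0})$, we obtain vectors $v_1$, $v_2$, \dots, $v_n \in V_{x_0}$ with $n \leq m+1 = \dim M + 1$ such that $0 \in \mathrm{Conv}(\{v_1, \dots, v_n\})$.

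Applying the equivalence of (3) and (1) in Lemma~\ref{UnionHalfspacesAndConvexHull} to the finite set $V = \{v_1, \dots, v_n\}$, we conclude that
$$
M = \bigcup_{i=1}^n H(x_0, v_i),
$$
which completes the proof.
\end{proof}
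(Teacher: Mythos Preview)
Your proof is correct and follows exactly the same route as the paper: Proposition~\ref{propcovering} gives the covering by all of $V_{x_0}$, Lemma~\ref{UnionHalfspacesAndConvexHull} translates this into $0\in\mathrm{Conv}(V_{x_0})$, Carath\'eodory's theorem extracts at most $m+1$ vectors, and Lemma~\ref{UnionHalfspacesAndConvexHull} again gives the finite covering. There is nothing to add.
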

\begin{proof} By  Proposition~\ref{propcovering},
$$
M=\bigcup_{v\in V_{x_0}}H(x_0,v).
$$
Lemma~\ref{UnionHalfspacesAndConvexHull} then shows that $0$ is in the convex hull of $V_{x_0}$. By Caratheodory's theorem \cite{caratheo}, there exists a subset $\{v_1, v_2, \dots, v_n\} \subset V_{x_0}$ of cardinal $n\leq \dim M +1$ whose convex hull also contains $0$. Another application of Lemma~\ref{UnionHalfspacesAndConvexHull} then shows that $M$ is the union of the $H(x_0, v_i)$ with $i=1$, $2$, \dots, $n$. 
\end{proof}

\subsection{Proof of the Fair-Cut Theorem}

We are now ready to prove the Fair-Cut Theorem \ref{cutpiethm}. We already observed that $\Phi(M) \leq \frac12$, so we just need to restrict attention to the lower bound. . 

By definition of the fair-cut center $x_0\in M$ and of the minimizing set $V_{x_0} \subset T_{x_0}^1M$, 
$$
\frac{\Vol{H(x_0,v)}}{\Vol{M}} = \phi_M(x_0) = \Phi(M).
$$
for every $v\in V_{x_0}$. 

Proposition~\ref{propcoveringwithbound} then shows that there exists $\{v_1, v_2, \dots, v_n\} \subset V_{x_0}$ with $n\leq m+1$ such that
$$
M=\bigcup_{i=1}^nH(x_0,v_i).
$$
As a consequence,
$$
\Vol{M}\leq \sum_{i=1}^{n}\Vol{H(x_0,v_i)} = n \Phi(M) \Vol{M} \leq (m+1)  \Phi(M) \Vol{M}. 
$$

This proves that $\Phi(M) \geq \frac1{m+1}$. 
\qed

\begin{rem}
 The only place where we used the condition that the sectional curvature is constant was in the proof of Lemma~\ref{insidethehalfspace}. It seems quite likely that Proposition \ref{propcoveringwithbound} and Theorem \ref{cutpiethm} hold  without this hypothesis. 
\end{rem}

\section{Proof of the Main Theorem}
\label{cha:themainestimate}

We now combine the Blocked View Theorem~\ref{blockedviewtheorem} and the Fair-Cut Theorem~\ref{cutpiethm} to provide an estimate on the percentage density of geodesic traffic.

\begin{thm}
\label{themaintheoremofdensity}
	Let $M$ be an $m$-dimensional compact convex Riemannian manifold of constant negative  sectional curvature $-k^2$, with $k>0$. Then, there exists  a universal radius $r_0=\frac{1}{k}\log(\sqrt{2}+1)$ and a point $x_0\in M$ such that at least $\frac{1}{m+1}$ of all the geodesics of the manifold pass through the ball $B(x_0,r_0)$.
\end{thm}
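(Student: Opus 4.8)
The plan is to combine the two main ingredients exactly as advertised. Let $x_0 \in M$ be a fair-cut center, as produced by Lemma~\ref{lipschitzcontinuity} and the compactness of $T^1 M$, and let $r_0 = \frac{1}{k}\log(\sqrt 2 + 1)$ be the blocking radius. I want to bound below the density $D(x_0, r_0) = \Vol{C(x_0, r_0)}/(\Vol M)^2$. Using the slicing formula~(\ref{eqblockview}), it suffices to produce a lower bound for $\Vol{C_p(x_0, r_0)}$ that holds for every (or almost every) $p \in M$, and then integrate over $p$.

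The key step is the pointwise estimate. Fix $p \in M$ with $p \neq x_0$, and let $v_p \in T_{x_0}^1 M$ be the tangent vector at $x_0$ of the geodesic $[p, x_0]$. The Blocked View Theorem~\ref{blockedviewtheorem} gives the inclusion
\begin{equation*}
H(x_0, v_p) \subseteq C_p(x_0, r_0),
\end{equation*}
so $\Vol{C_p(x_0, r_0)} \geq \Vol{H(x_0, v_p)}$. Now invoke the defining property of the fair-cut center: since $x_0$ realizes $\Phi(M) = \max_x \varphi_M(x)$ and $\varphi_M(x_0) = \min_{v \in T_{x_0}^1 M} \Vol{H(x_0,v)}/\Vol M$, every half-space based at $x_0$ — in particular $H(x_0, v_p)$ — satisfies
\begin{equation*}
\Vol{H(x_0, v_p)} \geq \varphi_M(x_0)\, \Vol M = \Phi(M)\, \Vol M \geq \tfrac{1}{m+1}\Vol M,
\end{equation*}
the last inequality being the Fair-Cut Theorem~\ref{cutpiethm}. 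Combining, $\Vol{C_p(x_0, r_0)} \geq \frac{1}{m+1}\Vol M$ for every $p \neq x_0$, i.e. for all $p$ outside a set of measure zero.

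Integrating this bound over $p \in M$ via~(\ref{eqblockview}) yields
\begin{equation*}
\Vol{C(x_0, r_0)} = \int_{p \in M} \Vol{C_p(x_0, r_0)}\, d\mu(p) \geq \int_M \tfrac{1}{m+1}\Vol M \, d\mu(p) = \tfrac{1}{m+1}(\Vol M)^2,
\end{equation*}
hence $D(x_0, r_0) \geq \frac{1}{m+1}$, which is exactly the assertion that at least a fraction $\frac{1}{m+1}$ of all geodesics $(p,q) \in M \times M$ pass through $B(x_0, r_0)$. I do not expect any serious obstacle here: the two hard theorems have already been proved, and what remains is bookkeeping. The only minor points to be careful about are that $v_p$ is genuinely well-defined for $p \neq x_0$ (which holds by convexity, since $[p, x_0]$ is the unique geodesic arc), that the exceptional locus $\{p = x_0\}$ has $\mu$-measure zero and so does not affect the integral, and that one should note the hypotheses of both the Blocked View Theorem (curvature $\leq -k^2$) and the Fair-Cut Theorem (constant curvature $-k^2$) are simultaneously satisfied under the standing assumption of constant curvature $-k^2$.
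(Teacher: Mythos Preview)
Your proposal is correct and follows essentially the same argument as the paper: choose $x_0$ to be a fair-cut center, apply the Blocked View Theorem to get $H(x_0,v_p)\subset C_p(x_0,r_0)$, bound $\Vol H(x_0,v_p)$ below by $\frac{1}{m+1}\Vol M$ via the Fair-Cut Theorem, and integrate using~(\ref{eqblockview}). Your added remarks about the measure-zero exceptional set $\{p=x_0\}$ and the compatibility of hypotheses are careful touches that the paper leaves implicit.
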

\begin{proof}
Let $x_0$ be the fair-cut center of $M$ provided by the Fair-Cut Theorem~\ref{cutpiethm}, and let $r_0=\frac{1}{k}\log(\sqrt{2}+1)$ be the radius of the Blocked View Theorem~\ref{blockedviewtheorem}. As in Section~\ref{sect:CountingGeodesicsRiemannian}, let
\begin{equation*}
C(x_0,r_0)=\{(p,q)\in M\times M; [p,q] \cap B(x_0,r_0)\neq \emptyset\}
\end{equation*}
be the set of geodesics $[p,q]$ of $M$ passing through the ball $B(x_0,r_0)$, and for $p\in M$ let
\begin{equation*}
C_p(x_0,r_0)=\{ q\in M ; [p,q] \cap B(x_0,r_0)\neq \emptyset\}
\end{equation*}
be the set of points whose view from $p$ is obstructed by $B(x_0, r_0)$. 

We saw in  Equation \ref{eqblockview} that
\begin{equation*}
\Vol{C(x_0,r_0)}=\int_{p\in M}\Vol{C_p(x_0,r_0)}~d\mu(p).
\end{equation*}
where $d\mu$ is the volume form of $M$.

For a given $p\in M$, the Blocked View Theorem~\ref{blockedviewtheorem} asserts that $C_p(x_0, r_0)$ contains a half-space $H(x_0, v_p)$, so that
$$
\Vol{C_p(x_0,r_0)} \geq \Vol H(x_0, v_p).
$$

By definition of the fair-cut center $x_0 \in M$ and by the Fair-Cut Theorem~\ref{cutpiethm},
$$
\Vol H(x_0, v_p) \geq \phi_M(x_0) \Vol M  \geq \Phi(M)\Vol M \geq \frac1{m+1} \Vol M.
$$

Combining these inequalities then gives 
\begin{equation*}
\Vol{C(x_0,r_0)}=\int_{p\in M}\Vol{C_p(x_0,r_0)}~d\mu(p) \geq  \frac1{m+1} (\Vol M)^2.
\end{equation*}

As a consequence, at least $\frac{1}{m+1}$ of all the geodesics of the manifold pass through the ball $B(x_0,r_0)$.
\end{proof}

\begin{definition}
	The ball  $B(x_0,r_0)$ is the \emph{congestion core} of $M$.
\end{definition}

	Note that the size of this congestion core is uniquely determined by the curvature, the dimension of the manifold provides an estimate of the density of the congestion, while the global geometry of the manifold contributes to the location of the core.

\section{Additional comments}

We conclude with a few observations and conjectures. 

\subsection{The set of fair-cut centers} 

\begin{prop}
\label{prop:FairCutCentersConvex}
 In a compact convex manifold of constant nonpositive curvature, the set of fair-cut centers is convex. 
\end{prop}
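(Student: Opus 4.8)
The plan is to show that the set of fair-cut centers is geodesically convex: if $x_0$ and $x_1$ are fair-cut centers and $x_t$ lies on the geodesic arc $[x_0,x_1]$ (which is well-defined and contained in $M$ by convexity of $M$), then $x_t$ is again a fair-cut center. Since $\phi_M(x_t)\leq\Phi(M)$ holds automatically, it suffices to prove the reverse inequality $\phi_M(x_t)\geq\Phi(M)$, i.e.\ that $\Vol{H(x_t,v)}\geq\Phi(M)\Vol M$ for every $v\in T_{x_t}^1M$. We may assume $x_t$ is interior to the arc, since otherwise $x_t\in\{x_0,x_1\}$ and there is nothing to prove. Let $u\in T_{x_t}^1M$ be a unit tangent vector to $[x_0,x_1]$ at $x_t$; note that the tangent vectors $v_{x_0}$ and $v_{x_1}$ of $[x_0,x_t]$ and $[x_1,x_t]$ at $x_t$ are $u$ and $-u$ respectively.

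First I would treat the directions $v$ with $\langle u,v\rangle\neq 0$. From $v_{x_0}=u$ and $v_{x_1}=-u$ one reads off that $x_0\in\operatorname{int}H(x_t,v)$ when $\langle u,v\rangle<0$, and $x_1\in\operatorname{int}H(x_t,v)$ when $\langle u,v\rangle>0$; call this endpoint $x_i$. Applying Lemma~\ref{insidethehalfspace} to the point $x_i$ lying in the interior of the half-space $H(x_t,v)$ produces a vector $v'\in T_{x_i}^1M$ with $H(x_i,v')\subset\operatorname{int}H(x_t,v)$, whence
$$
\Vol{H(x_t,v)}>\Vol{H(x_i,v')}\geq\phi_M(x_i)\Vol M=\Phi(M)\Vol M,
$$
using that $x_i$ is a fair-cut center. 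For the remaining directions $v$ with $\langle u,v\rangle=0$ (the ``equator'' of $T_{x_t}^1M$), I would invoke the continuity of $v\mapsto\Vol{H(x_t,v)}$ on $T_{x_t}^1M$, which follows from the Lipschitz estimate of Lemma~\ref{lipschitzcontinuity}: since $\{v\in T_{x_t}^1M:\langle u,v\rangle\neq0\}$ is dense, the inequality $\Vol{H(x_t,v)}\geq\Phi(M)\Vol M$ extends to all $v$. Hence $\phi_M(x_t)=\Phi(M)$ and $x_t$ is a fair-cut center, completing the proof.

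The substantive step is the application of Lemma~\ref{insidethehalfspace} — it is also the only place where constant curvature is genuinely used (and in the flat case $k=0$ the same argument for that lemma applies, so the statement indeed covers all constant nonpositive curvature). The rest is the sign bookkeeping identifying which of $x_0,x_1$ falls strictly inside $H(x_t,v)$; I expect this to be routine but it must be carried out carefully, since the whole argument hinges on being able to feed a point of the \emph{open} half-space into Lemma~\ref{insidethehalfspace}. Structurally, the argument closely parallels the proof of Proposition~\ref{propcovering}.
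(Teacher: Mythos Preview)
Your proof is correct and rests on the same key ingredient as the paper's, namely Lemma~\ref{insidethehalfspace} (and your remark that its proof goes through verbatim when $k=0$ is right). The organization differs slightly: the paper picks a \emph{minimizing} half-space $H(x,v)$ at the intermediate point $x$, argues that the endpoint $x_2$ cannot lie in its interior (else Lemma~\ref{insidethehalfspace} would contradict $\phi_M(x_2)=\Phi(M)$), and then uses constant curvature again to identify $H(x,v)=H(x_2,v_2)$ for some $v_2$, giving $\phi_M(x)=\Vol H(x_2,v_2)/\Vol M\geq\phi_M(x_2)=\Phi(M)$ directly. You instead treat \emph{all} directions $v$ at $x_t$, obtain the strict inequality $\Vol H(x_t,v)>\Phi(M)\Vol M$ whenever $\langle u,v\rangle\neq0$ via Lemma~\ref{insidethehalfspace}, and close the orthogonal case by continuity. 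Your route sidesteps the half-space identification $H(x,v)=H(x_2,v_2)$ at the cost of the extra density/limit step; the paper's route is a hair more direct once one has that identification, though as written it tacitly assumes the chosen endpoint lies in $H(x,v)$ --- a case split your version handles explicitly via the sign of $\langle u,v\rangle$.
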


\begin{proof}
Let $x_1$ and $x_2$ be two fair-cut centers for $M$. We want to show that every point $x$ in the geodesic arc $[x_1, x_2]$ is also a fair-cut center. 
 
By Proposition~\ref{propcovering}, $x_2$ belongs to some minimizing half-space  $H(x_1, v)$, namely a half-space such that
$$
\frac{\Vol H(x_1, v)}{\Vol M} = \phi(x_1) = \Phi(M). 
$$

We claim that, for any such minimizing half-space  $H(x_1, v)$ containing $x_2$, the point $x_2$ is necessarily on the boundary $\partial H(x_1, v)$. Indeed, if $x_2 $ was not in $ H(x_1, v)$, let $p$ be the point of $\partial H(x_1, v)$ that is closest to $x_2$ and let $w \in T_{x_2}^1M$ be the unit tangent vector to the geodesic arc $[x_2, p]$. Because the curvature is constant, the half-space $H(x_2, -w)$ is strictly contained in $H(x_1, v)$. In particular, $\Vol H(x_2, -w) < \Vol H(x_1, v)$, this would imply that 
$$
\phi(x_2) < \phi(x_1) = \Phi(M) = \phi(x_2),
$$
a contradiction. 

Let $x \in [x_1, x_2]$ be different from $x_1$ and $x_2$, and let $H(x,v)$ be a minimizing hyperplane for $x$. The same argument as before shows that $x_2$ cannot be contained in the interior of $H(x,v)$, as this would again provide the contradiction
$$
\phi(x_2) < \phi(x) \leq \Phi(M) = \phi(x_2). 
$$
Therefore, $x_2$ is in the boundary of $H(x,v)$ and, since the curvature is constant, $H(x,v) = H(x_2, v_2)$ for some $v_2\in T_{x_2}^1M$. Then,
$$
\phi(x) = \frac{\Vol H(x,v)}{\Vol M} = \frac{\Vol H(x_2,v_2)}{\Vol M} \geq \phi(x_2)= \Phi(M),
$$
from which we conclude that $x$ is also a maximum of the function $\phi$, namely is also a fair-cut center. 
\end{proof}

In fact, we conjecture the much stronger result that the fair-cut center is unique.

\subsection{Heuristics about the fair-cut index $\Phi(M)$}
\label{sect:heuristics}

Our lower bound $\frac 1{m+1}$ for the fair-cut index $\Phi(M)$ seems far from being sharp. A heuristic argument suggests a lower bound that is independent of the dimension, which would also improve our congestion estimates. We briefly discuss this argument. 

In a given dimension $m$, we can try to find a manifold $M$ that approximates the infimum of $\Phi(M)$ over all $m$--dimensional convex manifolds of negative curvature. Because $\Phi(M)$ is invariant under rescaling of the metric by a positive scalar, it makes sense to assume that such an approximatively minimizing manifold exists in curvature 0. 
Then, by trial and error based on the Marching Hyperplanes method of the next section, it seems that the infimum  in this curvature 0 case is realized by a simplex $\Delta_n$ in Euclidean space  $\mathbb R^m$. 
Since any two simplices in $\mathbb R^m$ are equivalent under an affine isomorphism, they have the same fair-cut index $\Phi(\Delta_n)$. 

The set of fair-cut centers is invariant under all the symmetries of the simplex, and is convex by Proposition~\ref{prop:FairCutCentersConvex}. It follows that the barycenter $x_0$ of $\Delta_n$ is necessarily a fair-cut center.

\begin{conj}
Let $\Delta_n$ be a simplex in the Euclidean space $\mathbb R^m$, with nonempty interior. Then
 $$\Phi(\Delta_n)=\left (\frac{m}{m+1} \right)^m.$$
\end{conj}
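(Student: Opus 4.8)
The plan is to compute $\Phi(\Delta_n)$ by first identifying the fair-cut center, then evaluating the minimum over all hyperplane directions through that center. As noted in the excerpt, the barycenter $x_0$ is a fair-cut center, so $\Phi(\Delta_n) = \varphi_{\Delta_n}(x_0) = \min_{v} \Vol(H(x_0,v))/\Vol(\Delta_n)$. Since all simplices are affinely equivalent and $\Phi$ is an affine invariant (both $\Vol H(x,v)$ and $\Vol M$ scale by the same Jacobian factor, and the notion of ``hyperplane through a point'' is affine), I may as well take $\Delta_n$ to be the standard simplex $\{(t_0,\dots,t_m) : t_i \geq 0, \sum t_i = 1\}$ sitting in the affine hyperplane $\sum t_i = 1$ of $\mathbb R^{m+1}$, with barycenter $x_0 = \frac1{m+1}(1,\dots,1)$. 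A hyperplane through $x_0$ inside this affine space cuts $\Delta_n$ into two pieces, and I want the direction minimizing the smaller piece.

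The key reduction is to show that the minimizing direction is the one whose hyperplane is parallel to a facet of the simplex. Intuitively, among all hyperplanes through the barycenter, the one parallel to a facet chops off the smallest sliver, namely the small homothetic copy of $\Delta_n$ near the opposite vertex. If $x_0$ is the barycenter and the hyperplane is parallel to the facet $\{t_0 = 0\}$, then the piece on the vertex side $\{t_0 \geq \frac1{m+1}\}$ is the image of $\Delta_n$ under the homothety centered at the vertex $e_0$ with ratio $\lambda$ determined by $1 - \lambda = \frac1{m+1}$, i.e.\ $\lambda = \frac{m}{m+1}$; hence that piece has volume $\left(\frac{m}{m+1}\right)^m \Vol(\Delta_n)$, and since this is at most $\frac12$ for $m \geq 1$, it is the smaller piece, giving $f_{x_0}(v) = \left(\frac{m}{m+1}\right)^m$ for this $v$. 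So the content of the conjecture is the inequality $f_{x_0}(v) \geq \left(\frac{m}{m+1}\right)^m$ for \emph{every} unit vector $v$, with equality exactly at the $m+1$ facet-parallel directions.

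To prove that the facet-parallel cut is the minimizer, I would parametrize hyperplanes through $x_0$ by a linear functional $\ell$ on the ambient affine space normalized so that $\ell(x_0) = 0$, and study the volume $g(\ell) = \Vol(\{x \in \Delta_n : \ell(x) \leq 0\})$. The plan is to analyze the critical points of $g$ on the sphere of normalized functionals. One clean approach: write the smaller piece as $\Delta_n \cap \{\ell \leq 0\}$ and use a barycentric-coordinate computation — the hyperplane $\{\ell = 0\}$ meets the edge from vertex $e_i$ to vertex $e_j$ (when $\ell(e_i), \ell(e_j)$ have opposite signs) at a point dividing it in a ratio determined by $\ell(e_i):\ell(e_j)$, and the cut-off polytope's volume is a rational function of the values $a_i = \ell(e_i)$ subject to $\sum a_i = 0$ (the barycenter condition). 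One then shows this volume, as a function on the hyperplane $\sum a_i = 0$ in $\mathbb R^{m+1}$, restricted to where exactly one $a_i$ is negative (the vertex-truncation regime), equals $\left(\frac{-a_i}{\sum_{j}\max(a_j,0) \cdot (\text{stuff})}\right)$ — more precisely, when only $a_0 < 0$ the piece is a simplex and its volume is $\prod_{j\neq 0}\frac{-a_0}{a_j - a_0} = \prod_{j\neq 0}\frac{|a_0|}{a_j + |a_0|}$, which by AM–GM is minimized subject to $\sum_{j\neq 0} a_j = |a_0|$ when all $a_j$ are equal, yielding exactly $\left(\frac{m}{m+1}\right)^m$; and separately one must check that cuts in the ``multi-vertex-truncation'' regime (two or more $a_i$ negative) give a larger smaller-piece, e.g.\ because by the upper bound $\Phi \leq \frac12$ direction each piece there is bounded below by comparison with a facet cut, or by a direct convexity/symmetrization argument.

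The main obstacle I expect is precisely the multi-sign regime: when the cutting hyperplane separates several vertices from several others, the cross-section is a combinatorially more complicated polytope (a ``prismatoid'' between two lower-dimensional simplices) and its volume is no longer a single product, so the clean AM–GM argument does not apply directly. Handling this will likely require either (a) a Schur-convexity / majorization argument showing $g$ is Schur-concave in the $|a_i|$ so that the extreme (most balanced) configuration is the facet-parallel one, or (b) a clever slicing argument reducing a $k$-versus-$(m+1-k)$ split to a comparison with the $1$-versus-$m$ split. A secondary, more routine obstacle is verifying that the barycenter is the \emph{only} relevant fair-cut center and that no hyperplane through it does worse than $\left(\frac{m}{m+1}\right)^m$ — but since the conjecture only asserts the value of $\Phi(\Delta_n)$ and we already know $x_0$ is a fair-cut center, it suffices to compute $\varphi_{\Delta_n}(x_0)$, so this reduces entirely to the minimization over directions just described.
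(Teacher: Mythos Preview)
The paper does not prove this statement --- it is stated as a conjecture, with no proof offered beyond the remark that it is equivalent to identifying the minimizing set $V_{x_0}$ at the barycenter with the vertex directions. So there is no paper proof to compare your proposal against.

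Your outline is sound as far as it goes: the affine-invariance reduction is correct, the facet-parallel computation giving $(m/(m+1))^m$ is correct, and in the single-vertex regime your AM--GM argument is valid (maximizing $\prod_{j\neq 0}(1+a_j)$ under $\sum a_j = |a_0|$ occurs at the symmetric point by concavity of $\log$). The gap you yourself flag --- the multi-sign regime where the cutting hyperplane separates $k\geq 2$ vertices from the rest --- is the genuine difficulty, and neither of your suggested routes (Schur-concavity, or reduction of a $k$-vs-$(m{+}1{-}k)$ split to a $1$-vs-$m$ split) is carried out.

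That said, the missing step is exactly the content of a classical result you do not mention: Gr\"unbaum's inequality (1960) states that for any convex body $K\subset\mathbb{R}^m$ with centroid $c$, every halfspace $H$ with $c\in\partial H$ satisfies $\Vol(K\cap H)\geq (m/(m+1))^m\,\Vol K$, with equality precisely for cones cut parallel to the base. Applied to $K=\Delta_n$ and $c=x_0$, this gives $\varphi_{\Delta_n}(x_0)\geq (m/(m+1))^m$ in one stroke, handling all sign regimes at once; together with your facet-parallel upper bound and the paper's symmetry argument that $x_0$ is a fair-cut center, this actually settles the conjecture. Gr\"unbaum's proof proceeds by Schwarz symmetrization to reduce to a body of revolution, then to a cone --- rather different from the combinatorial case analysis you sketch.
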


This is equivalent to the statement that, for the barycenter $x_0$ of $\Delta_n$, the minimizing set $V_{x_0} \subset T_{x_0}^1 \Delta_n$ consists of all unit vectors pointing towards the vertices of $\Delta_n$.

Note that $ \left (\frac{m}{m+1} \right)^m$ is a decreasing function of $m$, and converges to $\mathrm e$ as $m$ tends to $\infty$. All these considerations lead us to the following conjecture.

\begin{conj}
\label{conjectureoneovere}
The fair-cut index $\Phi(M)$ of any compact convex manifold $M$ with non-positive sectional curvature satisfies the sharp inequality
$$
	\Phi(M)\geq\frac{1}{\mathrm e}.
$$
\end{conj}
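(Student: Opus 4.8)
The plan is to establish Conjecture~\ref{conjectureoneovere} by first reducing it to the affine case and then to a single extremal example. Since $\Phi(M)$ is scale-invariant, the first step is to argue that an infimizing sequence of manifolds can be taken in curvature $0$, i.e.\ in Euclidean space. The obstacle here is subtle: one must show that making the curvature more negative cannot decrease $\Phi(M)$, or at least that one can compactify the space of examples so that the infimum is realized (or approached) by flat examples. A comparison argument is natural here: by Lemma~\ref{insidethehalfspace}-type reasoning, in negative curvature the half-spaces $H(x,v)$ are "smaller" relative to the ambient volume than their Euclidean counterparts in an appropriate sense, which should give $\Phi \geq \Phi_{\mathrm{flat}}$. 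I would try to make this precise using the exponential map from the fair-cut center and the fact that the Jacobian of $\exp$ grows faster in more negative curvature, so that half-space volume fractions shrink.

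Second, having reduced to convex bodies $K \subset \mathbb{R}^m$ with the Euclidean metric, I would reduce to the simplex. The heuristic in \S\ref{sect:heuristics} asserts, and trial with the Marching Hyperplanes method suggests, that $\Phi(K)$ is minimized over all convex bodies by the simplex $\Delta_m$, with $\Phi(\Delta_m) = \left(\frac{m}{m+1}\right)^m$. This is the analytic heart of the problem: it is a statement about hyperplane sections of convex bodies through a point, asserting that simplices are extremal for the functional $K \mapsto \max_{x} \min_{\text{hyperplanes through } x} (\text{smaller volume fraction})$. I would attack this via a symmetrization or shaking-down argument (Steiner-type symmetrization, or replacing $K$ by a cone over a facet) showing each step does not increase $\Phi$, with the simplex as the unique fixed point; alternatively, one analyzes the minimizing set $V_{x_0}$ at the fair-cut center and argues, using Proposition~\ref{propcoveringwithbound}, that extremality forces exactly $m+1$ minimizing directions in "general position," which pins down the simplex.

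Third, the elementary but essential computation: for the barycenter $x_0$ of $\Delta_m$, every hyperplane through $x_0$ cuts off a piece of relative volume at least $\left(\frac{m}{m+1}\right)^m$, with equality exactly for hyperplanes parallel to a facet (equivalently, normal to a median). This follows from the classical fact that the hyperplane through the barycenter parallel to a facet divides the simplex so that the small piece — a scaled copy of $\Delta_m$ with ratio $\frac{m}{m+1}$ — has volume fraction $\left(\frac{m}{m+1}\right)^m$, together with a verification that tilting the hyperplane only increases the smaller fraction; the latter is a short convexity/Brunn–Minkowski argument about the section function $t \mapsto \Vol(\Delta_m \cap \{H_t\})$. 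Finally, since $\left(\frac{m}{m+1}\right)^m = \left(1 - \frac{1}{m+1}\right)^m$ is decreasing in $m$ and tends to $\frac{1}{\mathrm e}$ from above, one concludes $\Phi(M) \geq \inf_m \left(\frac{m}{m+1}\right)^m = \frac{1}{\mathrm e}$, with the bound sharp in the limit of high dimension.

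I expect the main obstacle to be the second step — proving that the simplex is genuinely extremal among all convex bodies, not merely among a well-behaved subfamily. The Marching Hyperplanes heuristic is suggestive but not a proof, and convex-geometric extremal problems of this kind (which resemble known hard problems about hyperplane sections, slicing, and the Busemann–Petty circle of questions) can be resistant; it is conceivable that the true extremizer is not the simplex, or that the infimum over convex bodies in a fixed dimension is not attained. A secondary difficulty is rigorously justifying the reduction from variable negative curvature to the flat case, since the relevant space of manifolds is not obviously compact and the comparison inequality, while plausible, needs care near the boundary of the manifold. For these reasons I would regard this as a genuine conjecture rather than a result within easy reach, and I would present the argument above only as a program.
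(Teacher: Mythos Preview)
The statement is a \emph{conjecture} in the paper, not a theorem; the paper offers no proof, only the heuristic discussion in \S\ref{sect:heuristics} as motivation. Your proposal is explicitly a program rather than a proof, and you correctly flag it as such in your final paragraph.

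The program you outline coincides with the paper's own heuristic essentially line for line: scale-invariance of $\Phi$ suggests the infimum is approached in curvature $0$; trial-and-error (via the Marching Hyperplanes method) suggests the simplex is extremal among Euclidean convex bodies; computing $\Phi(\Delta_m)$ at the barycenter gives $\left(\frac{m}{m+1}\right)^m$; the monotone limit yields $\frac{1}{\mathrm e}$. Note, however, that the paper records the identity $\Phi(\Delta_m)=\left(\frac{m}{m+1}\right)^m$ as a \emph{separate, also unproven} conjecture immediately preceding Conjecture~\ref{conjectureoneovere}. So even your third step---which you describe as ``the elementary but essential computation''---is not settled in the paper, and the ``short convexity/Brunn--Minkowski'' verification you sketch (that tilting a barycentric hyperplane away from facet-parallel only increases the smaller fraction) would itself be new relative to what the paper establishes.

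Your identification of the two genuine obstacles---rigorously reducing variable nonpositive curvature to the flat case, and proving that the simplex is truly extremal among all convex bodies---is apt, and matches the paper's implicit reasons for leaving the statement open. There is nothing further to compare: the paper has no proof here.
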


\subsection{A method to estimate the fair-cut centers}
\label{section:marchinghypplanes}

The existence of fair-cut centers was abstractly established by minimizing the function $\phi(x)$. In practice, it may be useful to have a rough estimate of the location of these fair-cut centers. For this, we can use the following consequence of our Main Theorem \ref{theoremofdensity}.

\begin{lem}
\label{marchedover}
If $\Vol{H(x_1,v_1)}<\frac{1}{m+1}\Vol{M}$ for some $v_1\in T_{x_1}^1M$, then any fair-cut center $x_0$ is located outside of the half-space $H(x_1,v_1)$. 
\end{lem}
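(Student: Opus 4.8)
The plan is to argue by contradiction. Suppose, contrary to the statement, that some fair-cut center $x_0$ lies in the half-space $H(x_1,v_1)$, where by hypothesis $\Vol H(x_1,v_1) < \frac{1}{m+1}\Vol M$. I would split into two cases according to whether $x_0$ lies in the interior of $H(x_1,v_1)$ or on its boundary $\partial H(x_1,v_1)$.

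First I would handle the interior case. If $x_0$ is in the interior of $H(x_1,v_1)$, then Lemma~\ref{insidethehalfspace} (which is available because the curvature is constant) provides a unit vector $v\in T_{x_0}^1M$ such that $H(x_0,v)$ is contained in the interior of $H(x_1,v_1)$, and in particular $\Vol H(x_0,v) \le \Vol H(x_1,v_1) < \frac{1}{m+1}\Vol M$. But then
$$
\phi_M(x_0) = \min_{w\in T_{x_0}^1M} \frac{\Vol H(x_0,w)}{\Vol M} \le \frac{\Vol H(x_0,v)}{\Vol M} < \frac{1}{m+1},
$$
which contradicts the Fair-Cut Theorem~\ref{cutpiethm}, since $x_0$ being a fair-cut center means $\phi_M(x_0) = \Phi(M) \ge \frac{1}{m+1}$.

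Next I would handle the boundary case, where $x_0 \in \partial H(x_1,v_1)$. Here I would let $v_0 \in T_{x_0}^1M$ be the unit tangent vector at $x_0$ obtained by parallel-transporting $v_1$ appropriately — more precisely, since the curvature is constant, the hyperplane $\partial H(x_1,v_1)$ is totally geodesic and passes through $x_0$, so there is a unit normal $v_0$ to it at $x_0$ pointing to the same side as $v_1$, and then $H(x_0,v_0) = H(x_1,v_1)$. Consequently $\Vol H(x_0,v_0) = \Vol H(x_1,v_1) < \frac{1}{m+1}\Vol M$, and again $\phi_M(x_0) \le \frac{\Vol H(x_0,v_0)}{\Vol M} < \frac{1}{m+1}$, contradicting $\phi_M(x_0) = \Phi(M) \ge \frac{1}{m+1}$. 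Combining the two cases shows no fair-cut center can lie in $H(x_1,v_1)$, which is the claim.

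The routine ingredients here are the constant-curvature fact that a half-space moved along its own boundary coincides with itself (used in the boundary case) and Lemma~\ref{insidethehalfspace} (used in the interior case); neither should present difficulty given what is already proved. The only point requiring a little care — and the one I would expect to be the main obstacle, though a mild one — is making the boundary case fully rigorous: one must verify that when $x_0 \in \partial H(x_1,v_1)$ there genuinely is a direction $v_0$ at $x_0$ with $H(x_0,v_0)$ equal to (not merely contained in) $H(x_1,v_1)$, which relies on the totally-geodesic nature of geodesic hyperplanes in constant curvature and the fact that the defining condition $\langle v_q, v\rangle \le 0$ is determined by the hyperplane together with a choice of side. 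Once that identification is in place, the volume comparison and the contradiction with the Fair-Cut Theorem are immediate.
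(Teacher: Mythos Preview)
Your proof is correct and follows essentially the same approach as the paper: assume a fair-cut center $x_0$ lies in $H(x_1,v_1)$, produce a half-space through $x_0$ contained in (or equal to) $H(x_1,v_1)$, and derive $\phi_M(x_0)<\frac{1}{m+1}\le\Phi(M)$, contradicting the definition of a fair-cut center via the Fair-Cut Theorem. The only cosmetic difference is that the paper handles both the interior and boundary situations in one stroke by projecting $x_0$ to $\partial H(x_1,v_1)$ (which is exactly the construction underlying Lemma~\ref{insidethehalfspace}), whereas you split into two explicit cases; the content is the same.
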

\begin{proof} Suppose not, meaning that $x_0$ is located inside $H(x_1,v_1)$. Then, let $x_2$ be the projection of $x_0$ to $\partial H(x_1,v_1)$, and let the vectors $v_2\in T_{x_2}^1 M$ and $v_0\in T_{x_0}^1 M$ be tangent to the geodesic arc $[x_2, x_1]$. Then, $H(x_0, v_0)$ is contained in $H(x_2, v_2) = H(x_1, v_1)$, and
$$
\phi_M(x_0) \leq \frac{\Vol H(x_0, v_0)}{\Vol M} \leq  \frac{\Vol H(x_1, v_1)}{\Vol M}< \frac{1}{m+1} \leq \Phi(M) ,
$$
contradicting the fact that $\phi_M(x_0)  = \Phi(M)$. 
\end{proof}

Now let us provide a procedure which we call the \emph{Marching Hyperplanes Method}, in attempt to locate the whereabouts of the fair-cut center.

\begin{figure}[h]
	\centering
	\includegraphics[scale=0.6]{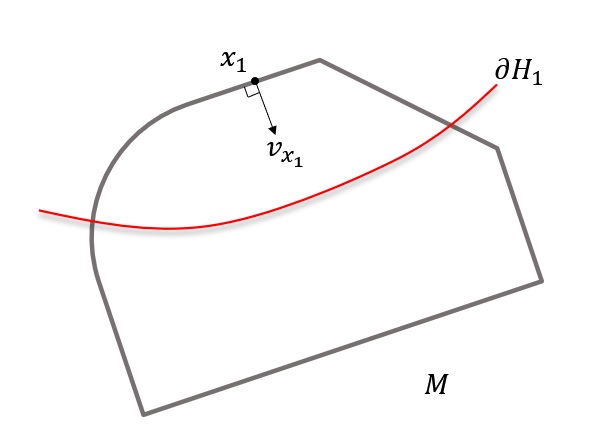}
	\caption{One marching hyperplane}
	\label{onemarchinghyperplane} 
\end{figure}

\textbf{Step 1}: Start from a point $x_1$ on the boundary of $M$, pick a direction $v_{x_1}$ that is perpendicular to $\partial M$ at $x_1$ and point inward, then march forward inside $M$ along the geodesic $g_1$ starting from $x_1$ following $v_{x_1}$, until reached a point $x_{1,0}$ such that the half-space $H(x_{1,0},-v_{x_{1,0}})$ has the volume of $\frac{1}{m+1}\Vol{M}$, where $v_{x_{1,0}}$ is the parallel translation of $v_{x_1}$ along $g_1$. We mark $\partial H_1=\partial H(x_{1,0},-v_{x_{1,0}})$, as shown in Figure \ref{onemarchinghyperplane}.

\textbf{Step 2 to m+1 and maybe more}: Pick points $\{x_i\}$, $i =2,...,m+1$, and maybe more, on $\partial M$, together with directions $v_{x_i}$ that is perpendicular to $\partial M$ at $x_i$ and point inward, then repeat the marching forward as \textbf{Step 1}, so we end up with lots of marked hyperplanes $\{\partial H_i\}$, $i =2,...,m+1$, and maybe more.

\textbf{Final Step}: By Proposition \ref{marchedover}, the fair-cut center $x_0$ is outside any half-spaces that we have marched over, namely, it is  located inside the entity that is bounded by all the hyperplanes, as shown in Figure \ref{manymarchinghyperplanes}.

Although this method does not provide a precise location the fair-cut center, for a small amount of steps, it does give us a very refined vicinity to locate the fair-cut center. In practice, I will suggest using the lower bound in Conjecture \ref{conjectureoneovere}, i.e., $\frac{1}{e}$, instead of $\frac{1}{m+1}$, when the dimension increases.

\begin{figure}[h]
	\centering
	\includegraphics[scale=0.6]{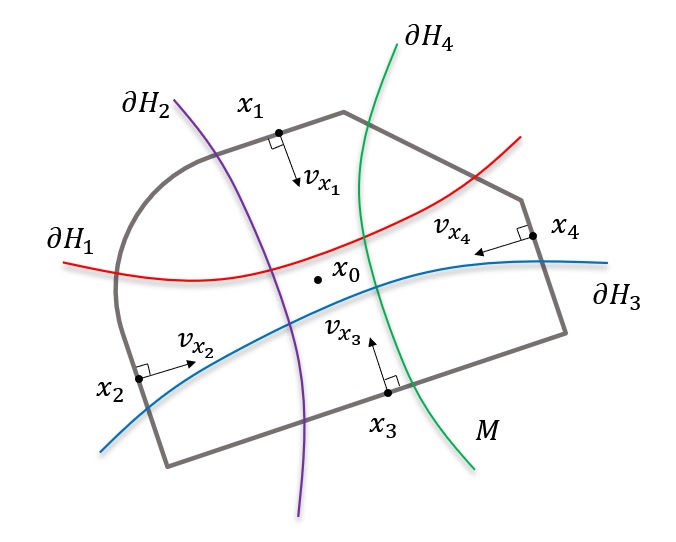}
	\caption{Many marching hyperplanes}
	\label{manymarchinghyperplanes} 
\end{figure}

\bibliography{references}{}
\bibliographystyle{plain}

\end{document}